\newtheorem{definition}{Definition}
\newtheorem{theorem}{Theorem}
\newtheorem{proposition}{Proposition}
\theoremstyle{plain}\newtheorem*{the 2}{Theorem 4}
\theoremstyle{plain}\newtheorem*{the 3}{Theorem 5}
\theoremstyle{plain}\newtheorem*{the 6}{Theorem 6}
\theoremstyle{definition}
\theoremstyle{definition}\newtheorem*{case 1.1}{Case 1.1}
\theoremstyle{definition}\newtheorem*{case 1.2}{Case 1.2}
\theoremstyle{definition}\newtheorem{cas}{Case}
\theoremstyle{definition}
\theoremstyle{definition}\newtheorem*{case 1}{Case 1}
\theoremstyle{definition}\newtheorem*{case 2}{Case 2}
\theoremstyle{definition}\newtheorem*{case 3}{Case 1}
\theoremstyle{definition}\newtheorem*{case 4}{Case 2}
\theoremstyle{definition}\newtheorem*{case 5}{Case 1}
\theoremstyle{definition}\newtheorem*{case 6}{Case 2}
\title{Degree conditions for disjoint path covers in digraphs}
\author{Ansong Ma$^{1}$,  Yuefang Sun$^{2}$
\\
$^{1}$ School of Mathematics and Statistics,
Ningbo University,\\
Zhejiang 315211,  China, mas0710@163.com\\
$^{2}$ Corresponding author. School of Mathematics and Statistics,\\
Ningbo University,
Zhejiang 315211, China, sunyuefang@nbu.edu.cn}
\date{}
\begin{document}
\begin{sloppypar}
	\maketitle
\begin{abstract}
In this paper, we study degree conditions for three types of disjoint directed path cover problems: many-to-many $k$-DDPC, one-to-many $k$-DDPC and one-to-one $k$-DDPC, which are intimately connected to other famous topics in graph theory, such as Hamiltonicity and $k$-linkage, and have a strong background of applications.

Firstly, we get two sharp minimum semi-degree sufficient conditions for the unpaired many-to-many $k$-DDPC problem and a sharp Ore-type degree condition for the paired many-to-many $2$-DDPC problem.
Secondly, we obtain a minimum semi-degree sufficient condition for the one-to-many $k$-DDPC problem on a digraph with order $n$, and show that the bound for the minimum semi-degree is sharp when $n+k$ is even and is sharp up to an additive constant 1 otherwise. Finally, we give a minimum semi-degree sufficient condition for the one-to-one $k$-DDPC problem on a digraph with order $n$, and show that the bound for the minimum semi-degree is sharp when $n+k$ is odd and is sharp up to an additive constant 1 otherwise.

\vspace{0.2cm}

\textbf{Keywords:}  Minimum semi-degree; Ore-type degree condition; Disjoint path covers; Many-to-many $k$-DDPC; One-to-many $k$-DDPC; One-to-one $k$-DDPC

\vspace{0.2cm}

{\bf AMS subject classification (2020)}: 05C07, 05C20, 05C38, 05C70.
\end{abstract}

\section{Introduction}

\subsection{Motivation, terminology and notation}

For terminology and notation not defined here, we refer to \cite{Bang-Jensen09}. 
Note that all digraphs considered in this paper have no parallel arcs or loops, and a path always means a directed path. A {\it biorientation} of a graph $G$ is a digraph which is obtained from $G$ by replacing each edge with two arcs of opposite directions.
A {\it complete digraph} $\overleftrightarrow{K}_{n}$ is a biorientation of a complete graph $K_{n}$. A {\it complete bipartite digraph} $\overleftrightarrow{K}_{a,b}$ is a biorientation of a complete bipartite graph $K_{a,b}$. We use $E_{n}$ to denote an empty digraph  with order $n$. For a set $S$, we use $|S|$ to denote the number of elements in $S$.

Let $D=(V(D), A(D))$ be a digraph. An {\it $x$-$y$ path} in $D$ is a path which is from $x$ to $y$ for two vertices $x, y \in V(D)$. 
 For two vertices $x$ and $y$ on a path $P$ of $D$ satisfying $x$ precedes $y$, let $xPy$ denote the subpath of $P$ from $x$ to $y$. A {\it Hamiltonian path} is a path containing all vertices of $D$. A digraph $D$ is said to be {\it Hamiltonian-connected} if $D$ has an $x$-$y$ Hamiltonian path for every choice of distinct vertices $x$, $y \in V(D)$. Let $s\in V(D)$ and let $T = \{t_{1}, t_{2}, \dots ,t_{k}\}\subseteq V(D) \setminus \{s\}$.  Let $S$ = $\{s_{1}, s_{2}, \dots , s_{k}\}$ and $T = \{t_{1}, t_{2}, \dots ,t_{k}\}$ be any two disjoint subsets of $V(D)$. A {\it directed $k$-linkage} for $S$ and $T$ in $D$ is a set of $k$ pairwise vertex disjoint paths $\{P_{1}, P_{2}, \dots, P_{k}\}$ such that $P_{i}$ is an $s_{i}-t_{i}$ path for each $i\in \{1, 2, \dots, k\}$. An {\it $(s, T)$-fan} is a collection of pairwise internally disjoint paths $\{P_{1}, P_{2}, \dots, P_{k}\}$  such that $P_{i}$ is an $s-t_{i}$ path for each $i\in \{1, 2, \dots, k\}$.


Let $S$ = $\{s_{1}, s_{2}, \dots , s_{k}\}$ and $T = \{t_{1}, t_{2}, \dots ,t_{k}\}$ be any two disjoint subsets of $V(D)$. We call $S$ and $T$ a {\it source set} and a {\it sink set}, respectively. A set of $k$ pairwise disjoint paths $\{P_{1}, P_{2}, \dots, P_{k}\}$ of $D$ is a {\it many-to-many $k$-disjoint directed path cover} ({\em many-to-many $k$-DDPC} for short) for $S$ and $T$, if $\bigcup^{k}_{i=1}V(P_{i}) = V(D)$ and each $P_{i}$ is a path from an element of $S$ to an element of $T$. A many-to-many $k$-DDPC is {\it paired} if $P_i$ is an $s_{i}-t_{i}$ path for each $i \in \{1, 2, \dots, k\}$, and it is {\it unpaired} if, for some permutation $\sigma$ on $\{1, 2, \dots, k\}$, $P_{i}$ is an $s_{i}-t_{\sigma(i)}$ path for each $i \in \{1, 2, \dots, k\}$. Note that a paired many-to-many $k$-DDPC can be seen as a special unpaired many-to-many $k$-DDPC. Also, observe that a paired many-to-many $k$-DDPC for $S$ and $T$ is a directed $k$-linkage for $S$ and $T$ covering all vertices of $D$ (see \cite{Bang-Jensen09, Chud-Scott-SeymourAM, Chud-Scott-Seymour, Ferrara, Gould2006, Kuhn2008, Liu-Rolek-Stephens-Ye-Yu, Meng-Rolek-Wang-Yu, Sun-Yeo, Thomas-Wollan, Zhou-Qi-Yan} for more information on the problem of $k$-linkage).

There are two variants of many-to-many $k$-DDPC: one-to-many $k$-DDPC and one-to-one $k$-DDPC. Let $S$=$\{s\}$ and $T$=$\{t_{1}, t_{2}, \dots, t_{k}\}$ be any two disjoint subsets of $V(D)$. A set of $k$ paths $\{P_{1}, P_{2}, \dots, P_{k}\}$ of $D$ is a {\it one-to-many $k$-disjoint directed path cover} ({\em one-to-many $k$-DDPC} for short) for $S$ and $T$, if $\bigcup^{k}_{i=1}V(P_{i}) = V(D)$, each $P_{i}$ is an $s-t_{i}$ path and $V(P_{i}) \cap V(P_{j}) = \{s\}$ for $i \neq j$. Similarly, let $S$=$\{s\}$ and $T$=$\{t\}$. A set of $k$ paths $\{P_{1}, P_{2}, \dots, P_{k}\}$ of $D$ is a {\it one-to-one $k$-disjoint directed path cover} ({\em one-to-one $k$-DDPC} for short) for $S$ and $T$, if $\bigcup^{k}_{i=1}V(P_{i}) = V(D)$, each $P_{i}$ is an $s-t$ path and $V(P_{i}) \cap V(P_{j}) = \{s, t\}$ for $i \neq j$. By definition, a one-to-many $k$-DDPC for $S$=$\{s\}$ and $T$=$\{t_{1}, t_{2}, \dots, t_{k}\}$ is an $(s,T)$-fan covering all vertices of $D$, and a one-to-one $1$-DDPC for $S=\{s\}$ and $T=\{t\}$ is a Hamiltonian path from $s$ to $t$.

We now introduce the definitions of  paired/unpaired many-to-many $k$-coverable digraphs and  one-to-many/one $k$-coverable digraphs.

\begin{definition}[{\bf Paired/unpaired many-to-many $k$-coverable digraphs}]\label{Def:cover-umtm} 
Let $D$ be a digraph of order $n \geq 2k$, where $k$ is a positive integer. If there is a paired (resp. unpaired) many-to-many $k$-DDPC in $D$ for any disjoint source set $S$=$\{s_{1}, s_{2}, \dots, s_{k}\}$ and sink set $T$=$\{t_{1}, t_{2}, \dots, t_{k}\}$, then $D$ is paired (resp. unpaired) many-to-many $k$-coverable.

\end{definition}



\begin{definition}[{\bf One-to-many/one $k$-coverable digraphs}]\label{Def:cover-otm}
Let $D$ be a digraph of order $n \geq k + 1$, where $k$ is a positive integer. If there is a one-to-many (resp. one-to-one) $k$-DDPC in $D$ for any disjoint source set $S$=$\{s\}$ and sink set $T$=$\{t_{1}, t_{2}, \dots, t_{k}\}$ (resp. $T = \{t\}$), then $D$ is  one-to-many (resp. one-to-one) $k$-coverable.

\end{definition}

\subsection{Main results}

Recall that disjoint path cover problems are intimately connected to other famous topics in graph theory, such as Hamiltonicity, $k$-linkage. In addition, they have applications in several areas such as software testing, database design and code optimization \cite{Ntafos}. Therefore, these problems have attracted much attention from researchers \cite{Cao18, Chen10, Khn08, Li20, Lim16, Lu21, MSZ23, Park16, Park20, Park21, Wang2019, Yu2018, Zhou}. In this paper, we will focus on the following three types of disjoint path cover problems in digraphs: many-to-many $k$-DDPC problem, one-to-many $k$-DDPC problem and one-to-one $k$-DDPC problem.






\subsubsection{Many-to-many $k$-DDPC problem}

For the unpaired many-to-many $k$-DDPC problem, we will get a sharp minimum semi-degree sufficient condition as follows.

\begin{theorem} \label{The:main1}
Let $D$ be a digraph of order $n \geq 3k$, where $k$ is a positive integer. If $\delta^{0}(D) \geq \lceil (n + k)/2 \rceil$, then $D$ is unpaired many-to-many $k$-coverable. Moreover, the bound for $\delta^{0}(D)$ is sharp.
\end{theorem}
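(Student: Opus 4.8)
The plan is to use the extremal method, fixing an arbitrary disjoint source set $S=\{s_1,\dots,s_k\}$ and sink set $T=\{t_1,\dots,t_k\}$. First I would note that the hypothesis makes $D$ highly connected: deleting any $k-1$ vertices leaves a digraph of order $n-k+1$ whose minimum semi-degree still exceeds half its order, hence it is strong by the Ghouila--Houri theorem; thus $D$ is $k$-strongly connected and in particular admits $k$ pairwise disjoint paths, each from a vertex of $S$ to a vertex of $T$. Among all \emph{unpaired systems} $\mathcal{P}=\{P_1,\dots,P_k\}$ whose set of initial vertices is exactly $S$ and whose set of terminal vertices is exactly $T$, I would then choose one maximizing the number $\sum_i|V(P_i)|$ of covered vertices. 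Writing $P=\bigcup_i V(P_i)$ and $U=V(D)\setminus P$ for the uncovered set (so $S\cup T\subseteq P$), the goal becomes to prove $U=\emptyset$; suppose not.

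The engine is an insertion step. Call a pair $(x,y)$ a \emph{slot} if $x$ immediately precedes $y$ on some $P_i$. If $Q=(a\cdots b)$ is a path of $D[U]$ and $(x,y)$ is a slot with $x\to a$ and $b\to y$ in $D$, then replacing the arc $xy$ by $x\to a\to\cdots\to b\to y$ produces an unpaired system with the same endpoint sets but strictly more covered vertices, contradicting maximality; hence every such configuration is blocked. Crucially, insertions never touch the initial or terminal vertices of the $P_i$, so the unpaired endpoint constraint is preserved for free. Let $\sigma$ be the successor map along the paths, a bijection from $P\setminus T$ onto $P\setminus S$. Blocking single-vertex insertions $Q=(u)$ means $\sigma(N^-(u)\cap(P\setminus T))\cap N^+(u)=\emptyset$, and a short count (both sets live in $P\setminus S$, of size $|P|-k$) yields $|N^-(u)\cap P|+|N^+(u)\cap P|\le |P|+k$ for every $u\in U$. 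This already settles two cases. If $U$ is independent then all neighbours of $u$ lie in $P$, so the left side is at least $2\lceil(n+k)/2\rceil\ge n+k$ while the right side is $n-|U|+k$, forcing $|U|\le 0$; the same inequality rules out $|U|=1$.

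The heart of the matter is the remaining case $|U|\ge 2$ with $D[U]$ containing arcs, where the elementary count is tight: inserting a single path involves only its two endpoints, each of which may have up to $|U|-1$ neighbours inside $U$, and the inequality degenerates to the useless $|U|\ge 2$; even summing over all of $U$ only gives $|U|^2\le 2\,e(U,U)$, which holds whenever $D[U]$ is nearly complete. To defeat this I would amortize the degree loss across many slots at once: partition $D[U]$ into a minimum number of paths $Q_1,\dots,Q_m$ and try to insert them into $m$ \emph{distinct} slots simultaneously. This calls for a Hall/defect argument on the bipartite ``segments versus slots'' incidence graph, lower-bounding the joint neighbourhood of any subfamily of segments from the arc counts between $U$ and $P$ together with the minimality of the partition, and then excluding the tight configurations (where $D[U]$ is forced to be almost complete) by invoking $n\ge 3k$, which guarantees that $P$ is large enough to furnish the needed distinct slots, and the $k$-strong connectivity established above. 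Constructing this simultaneous multi-slot absorption and verifying that the borderline extremal structures cannot survive the strict degree bound together with $n\ge 3k$ is where I expect the real work, and the main obstacle, to lie; an equivalent route phrases the whole problem as Hamiltonicity of the auxiliary digraph obtained by adding $k$ connector vertices joined from every sink and to every source, but the same amortization issue reappears there.

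For the sharpness claim I would exhibit, for each admissible $n,k$, a digraph with $\delta^0(D)=\lceil(n+k)/2\rceil-1$ and a choice of $S,T$ having no unpaired many-to-many $k$-DDPC. The natural construction uses a capacity-one-too-small cut: split $V(D)$ into blocks that are complete in themselves and almost complete across a single deliberately thinned directed cut, and place the sources and sinks so that any $k$ disjoint $S$--$T$ paths are forced to cross that cut more often than its capacity permits while covering all vertices. Checking that every in- and out-degree equals $\lceil(n+k)/2\rceil-1$ and that no spanning unpaired cover exists then establishes tightness, completing the argument.
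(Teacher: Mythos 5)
Your proposal has a genuine gap at exactly the point you flag yourself: the case $|U|\ge 2$ with $D[U]$ containing arcs is never resolved. Everything before that is sound --- the existence of some disjoint $S$--$T$ path system, the maximal-cover choice, the single-vertex insertion count giving $|N^{-}(u)\cap P|+|N^{+}(u)\cap P|\le |P|+k$, and the disposal of the independent-$U$ case --- but the promised ``simultaneous multi-slot absorption'' is only named, not constructed: you do not define the segments-versus-slots incidence structure precisely, do not prove the neighbourhood lower bounds needed for the Hall/defect condition, and do not exclude the tight configurations. This is precisely where the content of the theorem lies: the bound $\lceil (n+k)/2\rceil$ is tight (near-extremal structures such as two bioriented cliques overlapping in $k$ vertices really exist), so any amortized counting must exploit the hypotheses in a delicate way, and a plan for doing so cannot be accepted as a proof. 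The sharpness half has the same defect: you describe a ``thinned cut'' construction in general terms but never exhibit a concrete digraph, compute its semi-degrees, or verify that no unpaired cover exists, whereas the paper's Proposition~\ref{pro2} does all three (with separate constructions according to the parity of $n+k$).

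For contrast, the paper sidesteps the absorption difficulty entirely by induction on $k$. Given $S$ and $T$, delete $s_{k}$ and $t_{k}$ and add one new vertex $r$ with $N^{+}_{D'}(r)=N^{+}_{D}(s_{k})\cap X$ and $N^{-}_{D'}(r)=N^{-}_{D}(t_{k})\cap X$, where $X=V(D)\setminus\{s_{k},t_{k}\}$. The merged digraph $D'$ has order $n'=n-1$ and $\delta^{0}(D')\ge\lceil (n'+k-1)/2\rceil$, and $n'\ge 3(k-1)$, so by induction $D'$ has an unpaired $(k-1)$-DDPC for $S\setminus\{s_{k}\}$ and $T\setminus\{t_{k}\}$. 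Since $r$ is covered but is neither a source nor a sink, it is an inner vertex of some path; splitting that path at $r$ --- routing its initial segment into $t_{k}$ and starting its final segment from $s_{k}$, which is legitimate by the definition of $r$'s in- and out-neighbourhoods --- produces the desired $k$-DDPC. The base case $k=1$ is exactly Hamiltonian-connectedness, supplied by the Overbeck--Larisch theorem (Theorem~\ref{The:n+1}). If you wish to salvage your route you must actually carry out the multi-slot Hall argument, which is a substantial piece of work your text only gestures at; the inductive vertex-identification reduction is the far shorter path.
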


If the condition ``$n \geq 3k$" is replaced by ``$n=2k$", then the bound ``$\lceil (n + k)/2 \rceil$" for $\delta^{0}(D)$ in Theorem~\ref{The:main1} can be reduced to ``$\lceil (n + k)/2 \rceil - 1$", which is still sharp. We also prove that, when $n=2k$, a complete bipartite regular digraph $D$ with order $n$  is unpaired many-to-many $k$-coverable.

\begin{theorem} \label{The:main2}
The following assertions hold:
\begin{description}
\item[(i)]~Let $D$ be a digraph of order $n = 2k$, where $k$ is a positive integer. If $\delta^{0}(D) \geq \lceil (n + k)/2 \rceil - 1$, then $D$ is unpaired many-to-many $k$-coverable.	Moreover, the bound for $\delta^{0}(D)$ is sharp.
\item[(ii)]~The digraph $\overleftrightarrow{K}_{m,m}$ $(m \geq 2)$ is unpaired many-to-many $m$-coverable, but it is not unpaired many-to-many $k$-coverable when $1 \leq k < m$.
\end{description}
\end{theorem}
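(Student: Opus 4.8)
The plan rests on one structural reduction that makes the case $n=2k$ special. Since $|S|=|T|=k$ and $S,T$ are disjoint, we have $V(D)=S\cup T$, so every vertex is either a source or a sink. In any unpaired many-to-many $k$-DDPC the $k$ pairwise disjoint paths cover all $2k$ vertices, their start-vertices are exactly $S$ and their end-vertices are exactly $T$; hence no source or sink can be an internal vertex of a path, which forces each $P_i$ to be a single arc from a source to a sink. Consequently, finding an unpaired many-to-many $k$-DDPC for $S,T$ is \emph{equivalent} to finding a perfect matching in the bipartite graph $B=(S,T)$ in which $s\in S$ and $t\in T$ are adjacent iff $(s,t)\in A(D)$. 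Both parts of the theorem will be read off from this equivalence.

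For the existence statement in (i), I would verify Hall's condition for $B$ on the $S$-side. The hypothesis $\delta^{0}(D)\ge\lceil(n+k)/2\rceil-1=\lceil 3k/2\rceil-1$ forces every source to have at least $\lceil 3k/2\rceil-1-(k-1)=\lceil k/2\rceil$ out-neighbours inside $T$, and symmetrically every sink to have at least $\lceil k/2\rceil$ in-neighbours inside $S$; thus every vertex of $B$ has degree at least $\lceil k/2\rceil$. Given $S'\subseteq S$ with $|S'|=s$, if $s\le\lceil k/2\rceil$ then a single source already has enough neighbours and Hall's inequality is immediate. If $s>\lceil k/2\rceil$, I argue by contradiction: if $|N_{B}(S')|\le s-1$ then some sink $t\in T\setminus N_{B}(S')$ has all its $S$-in-neighbours in $S\setminus S'$, giving it at most $k-s<\lceil k/2\rceil$ such in-neighbours, contradicting the degree bound just established. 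Hence Hall holds and the matching exists.

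For the sharpness in (i), I would exhibit a digraph $D$ of order $2k$ with $\delta^{0}(D)=\lceil 3k/2\rceil-2$ and a choice of $S,T$ for which $B$ has no perfect matching. Starting from the complete digraph $\overleftrightarrow{K}_{2k}$ on $S\cup T$, I single out a set $S'$ of $\lceil k/2\rceil$ sources and delete every arc from $S'$ to a fixed set of $\lfloor k/2\rfloor+1$ sinks, so that in $B$ the whole of $S'$ reaches only $k-(\lfloor k/2\rfloor+1)=\lceil k/2\rceil-1$ sinks. A direct degree count (checking both parities of $k$) shows the deletions lower the affected out-degrees and in-degrees to exactly $\lceil 3k/2\rceil-2$ while all other degrees remain $2k-1$, so $\delta^{0}(D)=\lceil 3k/2\rceil-2$; and $|N_{B}(S')|=\lceil k/2\rceil-1<|S'|$ violates Hall, so no $k$-DDPC exists for this $S,T$.

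Part (ii) uses the same matching reduction for the positive claim and a bipartiteness/parity obstruction for the negative one. Write $X,Y$ for the two sides of $\overleftrightarrow{K}_{m,m}$. When $k=m$ we are again in the $n=2k$ regime, and since every arc joins $X$ to $Y$, a perfect matching of $B$ is obtained by matching $S\cap X$ to $T\cap Y$ and $S\cap Y$ to $T\cap X$ (these pairs have equal sizes because $S\cup T=V(D)$), so $\overleftrightarrow{K}_{m,m}$ is unpaired many-to-many $m$-coverable. For $k<m$, I would count the intersections of the covering paths with $X$ and $Y$: every path alternates sides, so a path contributes $+1,-1,$ or $0$ to $\sum_{i}(|V(P_{i})\cap X|-|V(P_{i})\cap Y|)$ according as it runs $X\to X$, $Y\to Y$, or between the sides; since the cover uses all of $X$ and all of $Y$ this sum is $0$, forcing the numbers of $X\to X$ and $Y\to Y$ paths to be equal, which in turn forces $|(S\cup T)\cap X|=k$. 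Because $k<m$, I can choose the $2k$ terminals so that $|(S\cup T)\cap X|\ne k$ (place them all in $X$ when $2k\le m$, or use all of $X$ and spill the remainder into $Y$ otherwise), and then no $k$-DDPC can exist. The main obstacle, I expect, is not the existence arguments, which are short once the reduction to bipartite matching is in place, but getting the two extremal objects exactly right: in (i) the deleted-arc construction must land on the value $\lceil 3k/2\rceil-2$ for both parities of $k$, and in (ii) the parity identity $|(S\cup T)\cap X|=k$ must be derived carefully from the alternation of paths.
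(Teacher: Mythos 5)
Your proposal is correct, and its backbone is the same as the paper's: the reduction ``$n=2k$ forces every path in the cover to be a single $S$--$T$ arc, so a DDPC is exactly a perfect matching in the bipartite graph of $S\to T$ arcs,'' followed by a Hall's-condition verification with the degree bound $\lceil k/2\rceil$ on both sides, is precisely the paper's proof of (i), and your matching of $S\cap X$ to $T\cap Y$ and $S\cap Y$ to $T\cap X$ is the paper's proof of the positive half of (ii). The genuine divergences are two. First, the sharpness example in (i): the paper (its Proposition 3) glues two complete digraphs along $k-1$ (odd $k$) or $k-2$ (even $k$) common vertices and counts disjoint $S$--$T$ arcs, whereas you delete all arcs from a set $S'$ of $\lceil k/2\rceil$ sources to a set of $\lfloor k/2\rfloor+1$ sinks inside $\overleftrightarrow{K}_{2k}$, violating Hall directly; both work, and yours is the more natural companion to the matching equivalence (note, though, that your converse direction of the equivalence is genuinely needed here, a point the paper leaves implicit). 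One slip in your degree count: for even $k$ the in-degrees of the affected sinks drop only to $3k/2-1$, not to $\lceil 3k/2\rceil-2$ as you assert; since the out-degrees of $S'$ do equal $3k/2-2$, the conclusion $\delta^{0}(D)=\lceil 3k/2\rceil-2$ stands, so this is cosmetic. Second, in the negative half of (ii) you derive the general necessary parity condition $\lvert (S\cup T)\cap X\rvert = k$ and then choose terminals violating it, while the paper fixes the single configuration $(T\cup\{s_k\})\subseteq Y$, $(S\setminus\{s_k\})\subseteq X$ (which has $\lvert (S\cup T)\cap X\rvert = k-1$) and reaches the contradiction $\lvert X\rvert+1=\lvert Y\rvert$; this is the same alternation argument, with yours packaged as a reusable criterion and the paper's as a one-shot instance.
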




For the paired many-to-many $k$-DDPC problem, K\"{u}hn, Osthus and Young \cite{Khn08} deduced a sharp minimum semi-degree sufficient condition.

\begin{theorem}\cite{Khn08} \label{The:mtm}
Let $D$ be a digraph of order $n \geq Ck^{9}$, where $C$ is a sufficiently large constant and $k \geq 2$. If $\delta^{0}(D) \geq \lceil n / 2\rceil + k - 1$, then $D$ is paired many-to-many $k$-coverable. Moreover, the bound for $\delta^{0}(D)$ is sharp.
\end{theorem}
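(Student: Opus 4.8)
The plan is to reformulate the paired cover as a spanning cycle system and then to use the directed regularity / robust-expansion machinery. Observe first that $\{P_{1},\dots,P_{k}\}$ is a paired many-to-many $k$-DDPC for $S,T$ if and only if, after adding to $D$ the $k$ return arcs $t_{1}s_{1},\dots,t_{k}s_{k}$, the resulting digraph $D^{+}$ contains $k$ vertex-disjoint cycles $C_{1},\dots,C_{k}$ with $t_{i}s_{i}\in A(C_{i})$ for each $i$ and $\bigcup_{i=1}^{k}V(C_{i})=V(D)$: delete each return arc $t_{i}s_{i}$ from $C_{i}$ to recover the $s_{i}$-$t_{i}$ path $P_{i}$, and conversely close each $P_{i}$ with $t_{i}s_{i}$. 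So it suffices to produce such a spanning system of $k$ disjoint cycles through the $k$ prescribed arcs. For $k=1$ this is exactly a Hamilton $s_{1}$-$t_{1}$ path, which ties the statement directly to Hamiltonicity.

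For the generic case I would apply the directed Regularity Lemma to $D$ and pass to a reduced digraph $R$. Because $\delta^{0}(D)\ge \lceil n/2\rceil+k-1$ and $n\ge Ck^{9}$, after setting aside the $2k$ vertices of $S\cup T$ the remaining digraph has minimum semi-degree essentially $n/2$, hence is a robust outexpander, and $R$ inherits this expansion up to the regularity error terms. I would first reserve, by a probabilistic argument, an absorbing structure that can swallow any sufficiently small set of leftover vertices while preserving a path system. Using the connecting property of robust outexpanders I would then route $k$ vertex-disjoint cycles through the prescribed return arcs $t_{i}s_{i}$ so that, via the directed Blow-up Lemma applied cluster by cluster, they cover all but an $o(n)$ set of vertices; finally I would absorb this leftover with the reserved structure. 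This yields the required spanning cycle system, and hence the paired cover.

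The main obstacle is twofold. First, unlike a plain Hamilton cycle the object is rigidly anchored: each $C_{i}$ must contain its own prescribed arc $t_{i}s_{i}$, the $2k$ special vertices have no freedom, and every exceptional vertex created by regularity must still be covered without two cycles colliding; marshalling the connecting and absorbing steps so that \emph{exactly} $k$ cycles arise, each carrying exactly one return arc, is the delicate point, and the room consumed by the $k$ separate linking operations is what forces the polynomial lower bound $n\ge Ck^{9}$ on the order. Second, the degree hypothesis is only \emph{essentially} $n/2$, so robust expansion can fail when $D$ is close to an extremal configuration; such near-extremal digraphs must be treated separately, building the cover by hand from the explicit structure. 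This extremal-versus-nonextremal split is precisely what upgrades an asymptotic threshold to the \emph{sharp} constant $\lceil n/2\rceil+k-1$.

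Finally, for sharpness I would construct, for the relevant $n$ and $k$, a digraph $D$ with $\delta^{0}(D)=\lceil n/2\rceil+k-2$ and a choice of disjoint $S,T$ admitting no paired spanning $k$-linkage. The idea is a near-complete digraph modified so that a bottleneck set of $k-1$ vertices must be shared by all $k$ prescribed paths while a further vertex (or small set) is reachable only through that bottleneck; a short counting argument then shows that some $s_{i}$-$t_{i}$ pair cannot be joined within its allotted part, or that some vertex is necessarily left uncovered. Hence the degree cannot be lowered by one, which matches the stated bound.
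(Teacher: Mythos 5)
This statement is Theorem~\ref{The:mtm}, which the paper itself does not prove but quotes from \cite{Khn08}; so there is no internal proof to compare against, and your proposal must stand on its own. Its opening reduction is correct and clean: adding the return arcs $t_{i}s_{i}$ and asking for $k$ vertex-disjoint cycles spanning $V(D)$, the $i$-th through $t_{i}s_{i}$, is equivalent to the paired cover (vertex-disjointness forces each cycle to contain exactly one return arc), and this is essentially how the problem is tied to ($k$-ordered) Hamiltonicity in the cited work. Everything after that, however, is a plan rather than a proof, and one of its steps is wrong as stated.

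The concrete gap is this. You first claim that, after setting aside $S\cup T$, minimum semi-degree ``essentially $n/2$'' makes the digraph a robust outexpander; this is false. The complete bipartite digraph $\overleftrightarrow{K}_{n/2,n/2}$ has $\delta^{0}=n/2$ and is not a robust outexpander (take one side of the bipartition as the test set: its robust outneighbourhood is the other side, of the same size), and digraphs close to it, or to two cliques sharing few vertices, fail likewise. You implicitly concede this two paragraphs later by introducing an extremal/non-extremal dichotomy, but that concession contradicts the earlier step, and the extremal case --- which is exactly where the sharp constant $\lceil n/2\rceil+k-1$ must be earned, and which in arguments of this type occupies the bulk of the work --- is dispatched in a single sentence with no identification of the extremal structures and no argument for constructing the $k$ anchored paths inside them by hand. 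The non-extremal case is likewise only named: absorption, connecting in robust outexpanders, and the blow-up lemma are the right tools, but making them yield exactly $k$ cycles, each through its own prescribed arc, with the $2k$ fixed endpoints and all exceptional vertices covered, is precisely the content of the theorem and is nowhere carried out. The same holds for sharpness: no explicit digraph is defined, its semi-degree is not computed, and no argument shows a paired cover fails to exist. In short, you have the correct reformulation and a roadmap consistent with the method of \cite{Khn08}, but the proof itself --- extremal analysis, anchored routing and absorption, and the extremal example --- is missing.
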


In this paper, we will obtain a sharp Ore-type degree condition for the paired many-to-many $2$-DDPC problem.

\begin{theorem}\label{The:2t2}
Let $D$ be a digraph of order $n$. If $d^{+}_{D}(x) + d^{-}_{D}(y) \geq n + 2$ for each $xy\not\in A(D)$, then $D$ is paired many-to-many $2$-coverable. Moreover, the bound for $d^{+}_{D}(x) + d^{-}_{D}(y)$ is sharp.
\end{theorem}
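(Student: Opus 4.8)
The plan is to reduce the existence of a paired many-to-many $2$-DDPC to the Hamiltonicity of a single auxiliary digraph, and then to invoke the classical Ore-type Hamiltonicity theorem of Woodall (see \cite{Bang-Jensen09}): a digraph $H$ on $N$ vertices is Hamiltonian whenever $d^{+}_{H}(x)+d^{-}_{H}(y)\geq N$ for every pair $x\neq y$ with $xy\notin A(H)$. Fix disjoint $S=\{s_1,s_2\}$ and $T=\{t_1,t_2\}$. I would form a digraph $\tilde{D}$ on $n-2$ vertices by contracting the pair $\{t_1,s_2\}$ into a single vertex $a$ and the pair $\{t_2,s_1\}$ into a single vertex $b$, where $a$ inherits the in-arcs of $t_1$ and the out-arcs of $s_2$, and $b$ inherits the in-arcs of $t_2$ and the out-arcs of $s_1$ (deleting loops and multiplicities so that $\tilde{D}$ is simple). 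The key point is that this is a \emph{loss-free, two-way} correspondence: any Hamiltonian cycle of $\tilde{D}$, read as $\cdots\to a\to\cdots\to b\to\cdots\to a$, splits at $a$ and $b$ into an $s_2$-$t_2$ path (leaving $a$ along an out-arc of $s_2$, reaching $b$ along an in-arc of $t_2$) and an $s_1$-$t_1$ path; these are vertex-disjoint and together cover $V(D)$, i.e.\ they form exactly a paired $2$-DDPC, and conversely a paired $2$-DDPC yields such a cycle. Thus it suffices to prove that $\tilde{D}$ is Hamiltonian.

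Next I would verify that $\tilde{D}$ meets Woodall's threshold $N=n-2$. An arc $p\to q$ of $D$ is destroyed by the contraction only if it points into a source ($q\in\{s_1,s_2\}$, which is never entered) or issues from a sink ($p\in\{t_1,t_2\}$, which is never left), or becomes a loop; consequently every half-degree drops by at most $2$, giving $d^{+}_{\tilde{D}}(x)\geq d^{+}_{D}(\hat{x})-2$ and $d^{-}_{\tilde{D}}(y)\geq d^{-}_{D}(\hat{y})-2$, where $\hat{x},\hat{y}$ are the relevant representatives ($s_2$ for the out-role of $a$, $t_1$ for the in-role of $a$, and symmetrically for $b$). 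Moreover $xy\notin A(\tilde{D})$ forces $\hat{x}\hat{y}\notin A(D)$, so the hypothesis yields $d^{+}_{D}(\hat{x})+d^{-}_{D}(\hat{y})\geq n+2$ and therefore $d^{+}_{\tilde{D}}(x)+d^{-}_{\tilde{D}}(y)\geq (n+2)-4=n-2=N$. Applying Woodall's theorem then produces a Hamiltonian cycle in $\tilde{D}$, hence a paired $2$-DDPC in $D$ for the given $S$ and $T$.

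I expect the main obstacle to be precisely this degree accounting for the contracted vertices $a$ and $b$. Because the margin after contraction is exactly $0$, each of the dropped arcs must be tracked, and one has to run the argument separately for every type of nonadjacent pair (both generic, exactly one of $a,b$, or the pair $a,b$ itself, where $a\to b$ is governed by whether $s_2t_2\in A(D)$). This is routine but delicate, and it is exactly here that the additive constant ``$+2$'' of the hypothesis is consumed, which both explains why the bound should be sharp and signals where an error would slip in; I would also dispose of small orders $n$ by a direct check, since $N=n-2$ must be at least $2$.

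For sharpness I would reverse the reduction. The bound $N$ in Woodall's theorem is best possible, witnessed for instance by the non-Hamiltonian digraph obtained from a complete digraph $C$ of size $c$ and a \emph{larger} independent set $I$ of size $c+1$ joined in both directions, which satisfies $d^{+}+d^{-}=N-1$ on its critical nonadjacent pairs but has no Hamiltonian cycle (any cycle would have to separate the $|I|>|C|$ independent vertices by distinct vertices of $C$). I would take such an extremal digraph on $N=n-2$ vertices and expand two of its vertices back into the pairs $\{t_1,s_2\}$ and $\{t_2,s_1\}$, choosing the expansion so that the four dropped arcs account for exactly the deficit; the resulting digraph on $n$ vertices satisfies $d^{+}_{D}(x)+d^{-}_{D}(y)\geq n+1$ for all nonadjacent pairs yet, by the correspondence, admits no spanning $2$-linkage for the corresponding $S$ and $T$. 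This shows that ``$n+2$'' cannot be weakened to ``$n+1$'', completing the sharpness claim.
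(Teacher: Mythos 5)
Your argument is correct, but it follows a genuinely different route from the paper's. Where you contract \emph{both} terminal pairs ($\{t_1,s_2\}\mapsto a$ and $\{t_2,s_1\}\mapsto b$) and invoke Woodall's Hamiltonian-cycle theorem at its exact threshold $N=n-2$, the paper contracts only \emph{one} pair: it replaces $s_2$ and $t_1$ by a single new vertex $w$ with $N^{+}(w)=N^{+}_{D}(s_2)\cap X$ and $N^{-}(w)=N^{-}_{D}(t_1)\cap X$, where $X=V(D)\setminus\{s_2,t_1\}$, checks the Ore condition $d^{+}_{D_1}(x)+d^{-}_{D_1}(y)\geq n_1+1$ on the resulting digraph $D_1$ of order $n_1=n-1$ (each half-degree drops by at most one, so the additive constant $2$ is consumed exactly as in your accounting, where each half-degree drops by at most two), and then applies Theorem~\ref{The:n+1} (Overbeck--Larisch: the Ore bound $n+1$ forces Hamiltonian-connectedness) to get a Hamiltonian $s_1$--$t_2$ path in $D_1$, which it splits at $w$ into the two required paths. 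Both reductions meet their classical threshold with zero slack and both are valid; yours is more symmetric and rests on the more standard cycle theorem, while the paper's reuses the theorem it already needs for the base case of Theorem~\ref{The:main1} and has lighter bookkeeping (one special vertex instead of two, so no analogue of your delicate pair $(a,b)$). The real divergence is in sharpness: the paper builds a self-contained extremal digraph (Proposition~\ref{Pro:2t2}: two large complete blocks glued at a single vertex $z$ plus the four terminals, with $d^{+}(x)+d^{-}(y)=n+1$ on nonadjacent pairs but no disjoint $s_1$--$t_1$ and $s_2$--$t_2$ paths), valid for every $n\geq 9$, whereas you pull a Woodall-extremal digraph back through your correspondence. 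Your plan can be completed, but two details it glosses over must be supplied: (i) when re-expanding $a$ and $b$ you must add \emph{all} arcs that no paired $2$-DDPC with pairing $(s_1,t_1),(s_2,t_2)$ could ever use---every out-arc of $t_1,t_2$, every in-arc of $s_1,s_2$, and the cross arcs $s_2t_1$ and $s_1t_2$---since otherwise pairs such as $(t_1,y)$ have $d^{+}(t_1)=0$ and violate the bound $n+1$; adding these arcs is harmless precisely because they can never lie on either path of such a cover, so the contraction of the expanded digraph is still your extremal $H$; and (ii) the family $\overleftrightarrow{K}_{c}$ joined completely to an independent set of size $c+1$ realizes only odd $N$, hence only odd $n$, which still shows that $n+2$ cannot be replaced by $n+1$, but gives a thinner family of examples than the paper's.
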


\subsubsection{One-to-many $k$-DDPC problem}

For the one-to-many $k$-DDPC problem, Zhou \cite{Zhou} demonstrated the following minimum semi-degree condition. 

\begin{theorem} \cite{Zhou} \label{The:otm}
Let $D$ be a digraph of order $n \geq Ck^{9}$, where $C$ is a sufficiently large constant and $k \geq 2$. If $\delta^{0}(D) \geq \lceil (n + k + 1) / 2\rceil$, then $D$ is one-to-many $k$-coverable.
\end{theorem}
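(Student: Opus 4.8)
The plan is to reduce the one-to-many $k$-DDPC problem to the unpaired many-to-many $k$-DDPC problem already settled in Theorem~\ref{The:main1}, exploiting the fact that an $(s,T)$-fan is free to begin each of its $k$ paths at \emph{any} out-neighbour of $s$. Fix a source $s$ and a sink set $T=\{t_1,\dots,t_k\}\subseteq V(D)\setminus\{s\}$. First I would delete $s$ and choose the sources of the auxiliary instance inside the out-neighbourhood $N^+_D(s)$ of $s$: since $\delta^0(D)\ge\lceil(n+k+1)/2\rceil$ we have $|N^+_D(s)|\ge\lceil(n+k+1)/2\rceil$, so $|N^+_D(s)\setminus T|\ge\lceil(n+k+1)/2\rceil-k\ge k$, and we may select $k$ distinct vertices $s_1,\dots,s_k\in N^+_D(s)\setminus T$. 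Set $D'=D-s$, an auxiliary digraph of order $n-1$ in which $\{s_1,\dots,s_k\}$ and $T$ are disjoint subsets.

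The heart of the argument is a degree computation that works out to an exact equality. Deleting the single vertex $s$ lowers each in- and out-degree by at most one, so
$\delta^0(D')\ge\delta^0(D)-1\ge\lceil(n+k+1)/2\rceil-1=\lceil(n+k-1)/2\rceil=\lceil((n-1)+k)/2\rceil$.
Since $n\ge Ck^9$ gives $n-1\ge 3k$, the digraph $D'$ satisfies exactly the hypotheses of Theorem~\ref{The:main1} with order $n-1$, and is therefore unpaired many-to-many $k$-coverable. It is essential here that the unpaired threshold is $\lceil(\cdot+k)/2\rceil$ rather than the paired threshold $\lceil\cdot/2\rceil+k-1$ of Theorem~\ref{The:mtm}: a reduction to the paired result would fall short by roughly $k$ in the semi-degree, whereas the single unit lost by deleting $s$ is compensated exactly by the drop from order $n$ to order $n-1$.

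Applying Theorem~\ref{The:main1} to $D'$ with source set $\{s_1,\dots,s_k\}$ and sink set $T$ yields $k$ pairwise disjoint paths $Q_1,\dots,Q_k$ with $\bigcup_i V(Q_i)=V(D)\setminus\{s\}$, where $Q_i$ runs from $s_i$ to $t_{\sigma(i)}$ for some permutation $\sigma$ of $\{1,\dots,k\}$. Because each $s_i\in N^+_D(s)$, prepending the arc $ss_i$ to $Q_i$ produces a path $P_i'$ from $s$ to $t_{\sigma(i)}$ in $D$. The paths $P_1',\dots,P_k'$ are pairwise disjoint except for the common vertex $s$, and together they cover $\{s\}\cup\bigcup_iV(Q_i)=V(D)$; relabelling so that the path ending at $t_i$ is called $P_i$ gives a one-to-many $k$-DDPC for $\{s\}$ and $T$. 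As $s$ and $T$ were arbitrary, $D$ is one-to-many $k$-coverable.

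The main obstacle is the degree accounting rather than any structural difficulty: one must verify that the loss incurred by removing $s$ never exceeds the gap between the one-to-many threshold on $n$ vertices and the unpaired many-to-many threshold on $n-1$ vertices, and that this persists across both parities of $n+k$ (it does, since $\lceil(n+k+1)/2\rceil-1=\lceil((n-1)+k)/2\rceil$ identically). I would also remark that this argument in fact establishes the conclusion under the far weaker hypothesis $n\ge 3k+1$, so the bound $n\ge Ck^9$ inherited from Theorem~\ref{The:mtm} becomes unnecessary once Theorem~\ref{The:main1} is available.
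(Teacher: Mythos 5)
Your proof is correct: deleting $s$, taking the $k$ auxiliary sources inside $N^+_D(s)\setminus T$, and invoking Theorem~\ref{The:main1} on $D-s$ (where the computation $\lceil(n+k+1)/2\rceil-1=\lceil((n-1)+k)/2\rceil$ is exact) does yield a one-to-many $k$-DDPC after prepending the arcs $ss_i$, and your closing remark that $n\geq 3k+1$ suffices is also right. It is worth noting, however, that the paper never proves this statement itself --- it is quoted from~\cite{Zhou} --- and instead proves the stronger Theorem~\ref{The:main3}, whose argument differs from yours in one small but consequential way: rather than deleting $s$, the paper keeps $s$ in the digraph and makes it one of the $k$ sources, setting $S'=\{s_1,\dots,s_k\}$ with $s_1=s$ and $s_2,\dots,s_k\in N^+_{V(D)\setminus T}(s)$, then applies Theorem~\ref{The:main1} to $D$ itself and prepends $ss_i$ only for $i\geq 2$ (the resulting paths meet pairwise exactly in $\{s\}$, as the definition of a one-to-many $k$-DDPC requires). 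That trick avoids the one unit of semi-degree your deletion step sacrifices, which is precisely why the paper can weaken the hypothesis from $\lceil(n+k+1)/2\rceil$ to $\lceil(n+k)/2\rceil$ and why your route, as written, proves Zhou's threshold but cannot be pushed to the sharper one: your reduction spends the extra $+1$ to pay for removing $s$, whereas the paper's reduction spends nothing. Both approaches correctly dispose of the $n\geq Ck^{9}$ hypothesis once Theorem~\ref{The:main1} is available.
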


Ma, Sun and Zhang \cite{MSZ23} continued to study the one-to-many $k$-DDPC problem in semicomplete digraphs. In their result, they decreased the lower bounds of the order and the minimum semi-degree condition in this class of digraphs as follows.

\begin{theorem} \cite{MSZ23} \label{The:otms}
Let $D$ be a semicomplete digraph of order $n\geq (9k)^{5}$, where $k \geq 2$. If $\delta^{0}(D)\geq \lceil (n+k-1)/2\rceil$, then $D$ is one-to-many $k$-coverable.
\end{theorem}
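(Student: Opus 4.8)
The plan is to reduce the fan-covering problem to a spanning path-system problem and then to exploit two structural features of semicomplete digraphs: their high connectivity under a semi-degree hypothesis, and the fact that they are path-mergeable. Write $N^{+}(s)$ for the out-neighbourhood of the source $s$. First I would observe that a one-to-many $k$-DDPC for $s$ and $T=\{t_{1},\dots,t_{k}\}$ is precisely an $(s,T)$-fan covering all vertices, and that, after deleting $s$, it corresponds to a partition of $V(D)\setminus\{s\}$ into $k$ vertex-disjoint paths whose terminal vertices are exactly $t_{1},\dots,t_{k}$ and whose initial vertices all lie in $N^{+}(s)$; prepending the arc from $s$ to each initial vertex recovers the fan. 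Thus it suffices to produce such a spanning path-partition in the semicomplete digraph $D'=D-s$.

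The next step is to turn the semi-degree hypothesis into connectivity. Using the standard separator argument for semicomplete digraphs — if $X$ is a separating set and $V(D)\setminus X$ splits into parts $V_{1},V_{2}$ with no arc from $V_{1}$ to $V_{2}$, then a vertex of $V_{1}$ forces $|V_{1}|\ge \delta^{0}(D)+1-|X|$ and a vertex of $V_{2}$ forces $|V_{2}|\ge \delta^{0}(D)+1-|X|$ — I would derive $\kappa(D)\ge 2\delta^{0}(D)-n+2\ge k+1$. In particular $\kappa(D)\ge k$, so by the fan version of Menger's theorem \cite{Bang-Jensen09} there is an $(s,T)$-fan, i.e.\ $k$ internally disjoint $s$-$t_{i}$ paths, though not yet a spanning one.

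The heart of the argument is to upgrade this to a spanning fan. I would take an $(s,T)$-fan $\mathcal{F}=\{P_{1},\dots,P_{k}\}$ maximising the number of covered vertices, and suppose some vertex $r$ remains uncovered. Because $D$ is semicomplete it is path-mergeable, and the classical insertion lemma for semicomplete digraphs says that either $r$ can be inserted into some $P_{i}$ across a consecutive pair $u\to v$ with $u\to r\to v$, or else $r$ is ``monotonically'' related to each path; the abundance of arcs forced by $\delta^{0}(D)\ge\lceil(n+k-1)/2\rceil$, together with strong connectivity, then lets one reroute a path to absorb $r$ while keeping its terminal vertex fixed and its initial vertex in $N^{+}(s)$. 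Either way the covered set grows, contradicting maximality, so $\mathcal{F}$ is spanning. The large order $n\ge(9k)^{5}$ is what guarantees that the counting inside these insertion and rerouting steps always succeeds: it supplies enough common in- and out-neighbours to carry out each absorption without colliding with the other $k-1$ paths or violating the source constraint.

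The step I expect to be the main obstacle is exactly this spanning upgrade. The semi-degree bound $\lceil(n+k-1)/2\rceil$ sits essentially at the strong-connectivity threshold, so there is almost no slack: one cannot simply delete the short pieces of $k-1$ paths and invoke Hamiltonian-connectedness on the remainder, since removing even $O(k)$ vertices can drop the connectivity below usable levels. The absorption must therefore be performed \emph{in situ}, simultaneously respecting that all paths share the single source $s$, that the $k$ sinks are prescribed and distinct, and that every source must be an out-neighbour of $s$. Managing these three constraints together while certifying that each insertion step can be completed is where the semicomplete structure and the large lower bound on $n$ are genuinely needed.
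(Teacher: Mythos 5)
First, a point of orientation: the paper does not prove this statement at all --- Theorem~\ref{The:otms} is quoted from \cite{MSZ23} purely as background, so there is no in-paper proof to compare yours against, and your attempt must be judged on its own merits. The routine parts of your proposal are fine: the reformulation of a one-to-many $k$-DDPC as a spanning $(s,T)$-fan, the separator argument giving $\kappa(D)\ge 2\delta^{0}(D)+2-n\ge k+1$, and the existence of some (non-spanning) $(s,T)$-fan via Menger are all correct. But the theorem's entire content lies in the step you yourself flag as the main obstacle --- upgrading a maximal fan to a spanning one --- and there your argument is an assertion, not a proof.

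Here is concretely where it breaks. For an uncovered vertex $r$ and a fan path $P_i$ from $a_i\in N^{+}_{D}(s)$ to $t_i$, the semicomplete insertion trichotomy gives exactly three outcomes: $r\to a_i$ (prepend), $t_i\to r$ (append), or an internal insertion $u\to r\to v$ along $P_i$. Appending is forbidden because the terminus must remain $t_i$; prepending is usable only if additionally $s\to r$. So the residual case --- no internal insertion into any $P_i$, and either $r\to s$ or $r\not\to a_i$ for every $i$ --- is exactly the case your maximality argument must handle, and for it you offer only the sentence that ``the abundance of arcs \dots lets one reroute a path to absorb $r$.'' No rerouting is exhibited, and none of the counting that would invoke $n\ge(9k)^{5}$ is carried out; the hypothesis appears only rhetorically. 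Note also that the degree bound is genuinely tight here: $\lvert N^{+}_{D}(s)\cap N^{-}_{D}(t_i)\rvert\ge 2\lceil(n+k-1)/2\rceil-n\ge k-1$, which is not even enough to greedily build $k$ disjoint length-two paths $s\to x_i\to t_i$ avoiding $T$, so any absorption must recombine vertices across several fan paths simultaneously while preserving all $k$ prescribed termini and the source constraint. That global rerouting is the theorem (it occupies an entire paper in \cite{MSZ23}), and it is missing from your proposal.
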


In this paper, by Theorem~\ref{The:main1}, we will make improvements for Theorem~\ref{The:otm} by replacing ``$n \geq Ck^{9}$" with ``$n \geq 3k$" and replacing ``$\delta^{0}(D) \geq \lceil (n + k + 1) / 2\rceil$" with ``$\delta^{0}(D) \geq \lceil (n + k)/2 \rceil$", which
is sharp when $n+k$ is even and is sharp up to an additive constant 1 otherwise.
\begin{theorem} \label{The:main3}
Let $D$ be a digraph of order $n \geq 3k$, where $k \geq 2$. If $\delta^{0}(D) \geq \lceil (n + k)/2 \rceil$, then $D$ is one-to-many $k$-coverable. Moreover, the bound for $\delta^{0}(D)$ is sharp when $n+k$ is even and is sharp up to an additive constant 1 otherwise.
\end{theorem}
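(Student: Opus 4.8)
The plan is to reduce the one-to-many $k$-DDPC problem on $D$ directly to the unpaired many-to-many $k$-DDPC problem on the \emph{same} digraph $D$, invoking Theorem~\ref{The:main1} as a black box. The key observation is that an $(s,T)$-fan covering $V(D)$ can be read off from a many-to-many cover whose source set already contains $s$: if I make $s$ one of the $k$ sources and take the remaining $k-1$ sources to be out-neighbors of $s$, then the path starting at $s$ needs no modification, while each of the other $k-1$ paths is turned into an $s$--$T$ path simply by prepending the arc from $s$ to its starting out-neighbor. This keeps $s$ inside the digraph and so avoids the loss of one unit in the minimum semi-degree that deleting $s$ would incur; consequently the argument is uniform in the parity of $n+k$.

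Concretely, given disjoint $S=\{s\}$ and $T=\{t_{1},\dots,t_{k}\}$, I would first choose $k-1$ distinct out-neighbors $u_{1},\dots,u_{k-1}$ of $s$ lying outside $T$. This is possible because $|N^{+}_{D}(s)\setminus T|\ge d^{+}_{D}(s)-k\ge \lceil(n+k)/2\rceil-k=\lceil(n-k)/2\rceil\ge k$, where the last inequality uses $n\ge 3k$. Setting $S^{\ast}=\{s,u_{1},\dots,u_{k-1}\}$, the sets $S^{\ast}$ and $T$ are disjoint with $|S^{\ast}|=|T|=k$. Since $D$ has order $n\ge 3k$ and $\delta^{0}(D)\ge\lceil(n+k)/2\rceil$, Theorem~\ref{The:main1} yields an unpaired many-to-many $k$-DDPC $\{P_{0},P_{1},\dots,P_{k-1}\}$ for $S^{\ast}$ and $T$; since the $k$ paths are pairwise disjoint and each source is the start of exactly one path, I may label $P_{0}$ as the path starting at $s$ and $P_{i}$ as the path starting at $u_{i}$ for $i\ge 1$, and they cover $V(D)$. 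I would then replace each $P_{i}$ $(1\le i\le k-1)$ by the path obtained by prepending the arc $su_{i}$; because $s\in V(P_{0})$ and $P_{0},\dots,P_{k-1}$ are pairwise disjoint, no $P_{i}$ with $i\ge 1$ contains $s$, so each prepended path is genuinely a path. Together with $P_{0}$ these give $k$ paths from $s$ to the $k$ elements of $T$ that pairwise meet only in $s$ and cover $V(D)$, i.e., a one-to-many $k$-DDPC for $S$ and $T$. Hence $D$ is one-to-many $k$-coverable.

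The sufficiency direction therefore presents no genuine obstacle once Theorem~\ref{The:main1} is in hand; the only point to verify is the counting inequality guaranteeing enough out-neighbors of $s$, which is exactly where $n\ge 3k$ is used. The delicate part is instead the sharpness claim, and this is where the parity of $n+k$ enters. I would establish it by exhibiting extremal digraphs built around a bottleneck set that cannot be traversed by $k$ internally disjoint $s$-paths covering everything: for $n+k$ even, a digraph with $\delta^{0}(D)=\lceil(n+k)/2\rceil-1$ admitting no one-to-many $k$-DDPC for a suitable $(s,T)$, which shows the threshold is exactly $\lceil(n+k)/2\rceil$; and for $n+k$ odd, a corresponding digraph with $\delta^{0}(D)=\lceil(n+k)/2\rceil-2$, the loss of an extra unit being forced by a parity/counting argument on the numbers of vertices the covering paths can absorb, which pins the true threshold to within $1$ of $\lceil(n+k)/2\rceil$. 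Verifying that these constructions attain the stated semi-degree and genuinely fail to be coverable — rather than the reduction above — is the main technical work.
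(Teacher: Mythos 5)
Your sufficiency argument is correct and is essentially the paper's own proof: the paper likewise keeps $s$ inside the digraph, chooses $s_{1}=s$ together with $k-1$ further sources from $N^{+}_{V(D)\setminus T}(s)$ (guaranteed to exist since $\delta^{0}(D)\ge\lceil(n+k)/2\rceil\ge 2k$ gives $d^{+}_{V(D)\setminus T}(s)\ge k$; your estimate $\lvert N^{+}_{D}(s)\setminus T\rvert\ge\lceil(n-k)/2\rceil\ge k$ is an equivalent form of the same count), applies Theorem~\ref{The:main1} to this source set and $T$, and converts the resulting unpaired many-to-many $k$-DDPC into a one-to-many $k$-DDPC by prepending the arcs from $s$ to the paths not starting at $s$.

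However, the statement you are proving also asserts that ``the bound for $\delta^{0}(D)$ is sharp when $n+k$ is even and is sharp up to an additive constant 1 otherwise,'' and for this half you give only a plan, explicitly deferring ``the main technical work.'' That is a genuine gap: naming the correct target values $\lceil(n+k)/2\rceil-1$ (for $n+k$ even) and $\lceil(n+k)/2\rceil-2$ (for $n+k$ odd) is not a proof that digraphs attaining them and failing to be coverable exist. The paper supplies this in Proposition~\ref{pro4} with two explicit constructions. For $n+k$ odd: $V(D)=A\cup B$ with $D[A]\cong D[B]\cong\overleftrightarrow{K}_{(n+k-1)/2}$ and $\lvert A\cap B\rvert=k-1$; taking $s\in A\setminus B$ and $T\subseteq B$, any $k$ paths from $s$ to $T$ meeting pairwise only in $s$ would need $k$ distinct vertices of the cut $A\cap B$, which has only $k-1$ vertices, and here $\delta^{0}(D)=\lceil(n+k)/2\rceil-2$ (the loss of the extra unit comes from the parity constraint that $(n+k-1)/2$ be an integer). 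For $n+k$ even: $D$ is obtained from $D_{1}\cong\overleftrightarrow{K}_{(n+k)/2-1}$ and $D_{2}\cong E_{(n-k)/2+1}$ by adding all arcs in both directions between them; taking $s\in V(D_{2})$ and $T\subseteq V(D_{1})$, the $k$ paths of any would-be cover can absorb at most $((n+k)/2-1)-k+1=(n-k)/2<\lvert V(D_{2})\rvert$ vertices of the independent set $V(D_{2})$, and here $\delta^{0}(D)=\lceil(n+k)/2\rceil-1$. (Incidentally, your sketch attaches the counting/absorption mechanism to the odd case and the bottleneck idea to the even case; in the paper it is the other way around.) Without such constructions and their verification, the ``Moreover'' half of the theorem remains unproved.
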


\subsubsection{One-to-one $k$-DDPC problem}

For the one-to-one $k$-DDPC problem, Cao, Zhang and Zhou \cite{Cao18} proved the following result. 

\begin{theorem} \cite{Cao18} \label{The:oto}
Let $D$ be a digraph of order $n \geq Ck^{9}$, where $C$ is a sufficiently large constant and $k \geq 2$. If $\delta^{0}(D) \geq \lceil (n + k + 1)/2 \rceil$, then $D$ is one-to-one $k$-coverable.
\end{theorem}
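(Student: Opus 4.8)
The plan is to reduce the one-to-one $k$-DDPC problem to the one-to-many $k$-DDPC problem already settled in Theorem~\ref{The:main3}, by deleting the common sink $t$, solving a fan problem on the remaining digraph, and then reattaching $t$ at the very end. Fix an arbitrary source $s$ and sink $t$. Since $\delta^{0}(D)\geq\lceil(n+k+1)/2\rceil$ is large, the in-neighbourhood $N^{-}_{D}(t)$ contains at least $k$ vertices distinct from $s$; I would choose any $k$ of them, say $t_{1},\dots,t_{k}$, and treat them as a sink set. Put $D'=D-t$, a digraph of order $n-1$. Deleting $t$ removes at most one in-arc and at most one out-arc from each surviving vertex, so
\[
\delta^{0}(D')\ \geq\ \delta^{0}(D)-1\ \geq\ \left\lceil\frac{n+k+1}{2}\right\rceil-1\ =\ \left\lceil\frac{(n-1)+k}{2}\right\rceil,
\]
which is exactly the minimum semi-degree threshold required by Theorem~\ref{The:main3} for a digraph of order $n-1$.

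Next I would apply Theorem~\ref{The:main3} to $D'$ (whose order $n-1$ satisfies $n-1\geq 3k$, since $n\geq Ck^{9}$) with source $\{s\}$ and sink set $\{t_{1},\dots,t_{k}\}$. This yields a one-to-many $k$-DDPC $\{P_{1},\dots,P_{k}\}$ of $D'$, that is, an $(s,\{t_{1},\dots,t_{k}\})$-fan with $\bigcup_{i}V(P_{i})=V(D)\setminus\{t\}$ and $V(P_{i})\cap V(P_{j})=\{s\}$ for $i\neq j$. Because each $t_{i}\in N^{-}_{D}(t)$, the arc $t_{i}t$ belongs to $A(D)$; appending it to $P_{i}$ produces an $s$-$t$ path $P_{i}'$ (note $t\notin V(P_{i})$, so $P_{i}'$ is simple). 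The family $\{P_{1}',\dots,P_{k}'\}$ then covers $(V(D)\setminus\{t\})\cup\{t\}=V(D)$, every $P_{i}'$ runs from $s$ to $t$, and since each $t_{i}\neq s$ lies on $P_{i}$ only, we get $V(P_{i}')\cap V(P_{j}')=\{s\}\cup\{t\}=\{s,t\}$ for $i\neq j$. Hence $\{P_{1}',\dots,P_{k}'\}$ is a one-to-one $k$-DDPC for $s$ and $t$; as $s,t$ were arbitrary, $D$ is one-to-one $k$-coverable.

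The whole argument is concentrated in the displayed degree identity: the threshold $\lceil(n+k+1)/2\rceil$ for the one-to-one problem is precisely one more than the threshold $\lceil((n-1)+k)/2\rceil$ for the one-to-many problem on the smaller digraph $D-t$, so deleting a single vertex absorbs the extra ``$+1$'' exactly. Since the reduction only uses $n-1\geq 3k$, it simultaneously lowers the order requirement from $n\geq Ck^{9}$ to $n\geq 3k+1$. Moreover, because $n+k$ is odd exactly when $(n-1)+k$ is even, the extremal configurations witnessing sharpness of Theorem~\ref{The:main3} for even $(n-1)+k$ can be adapted to the one-to-one setting, giving a bound that is sharp when $n+k$ is odd and sharp up to an additive $1$ otherwise.

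I expect the difficulty here to be bookkeeping rather than a deep obstacle: checking that $t$ has at least $k$ in-neighbours other than $s$ (immediate from the degree hypothesis), verifying that deleting $t$ does not drop $\delta^{0}(D')$ below the Theorem~\ref{The:main3} threshold (the displayed identity), and confirming $n-1\geq 3k$. The one genuine care point is the order boundary: if one wanted to push the hypothesis down to $n=3k$, then $D-t$ would have order $3k-1$, just outside the range of Theorem~\ref{The:main3}, and this single value of $n$ would require a small separate argument. Apart from that borderline case, the deletion-and-reattachment reduction applies cleanly and yields the sharp threshold.
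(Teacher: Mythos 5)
Your reduction is correct: $t$ has at least $k$ in-neighbours other than $s$; deleting $t$ costs each remaining vertex at most one in- and one out-neighbour; the identity $\lceil (n+k+1)/2 \rceil - 1 = \lceil ((n-1)+k)/2 \rceil$ holds; $n-1 \geq 3k$ is comfortable; and reattaching the arcs $t_i t$ to the $(s,\{t_1,\dots,t_k\})$-fan covering $V(D)\setminus\{t\}$ does yield a one-to-one $k$-DDPC. Be aware, though, that the paper never proves Theorem~\ref{The:oto} itself --- it is quoted from \cite{Cao18} --- and what the paper proves instead is the strictly stronger Theorem~\ref{The:main4}, with threshold $\lceil (n+k-1)/2 \rceil$ and order requirement $n \geq k+1$. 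The paper's route there is an induction on $k$: the base case $k=2$ takes a Hamiltonian $s$-$t$ path (via Theorem~\ref{The:n+1}) and finds, by a counting argument, a vertex $w$ with $sw^{+}, wt \in A(D)$ to split it into two $s$-$t$ paths; the inductive step picks $h \in N^{+}_{D}(s) \cap N^{-}_{D}(t)$, deletes it, and appends the two-arc path $sht$. The structural difference is that the paper's induction decrements $n$ and $k$ simultaneously, so the degree loss of $1$ per deletion is exactly absorbed by the drop in the threshold $\lceil (n+k-1)/2 \rceil$; your reduction decrements $n$ only while keeping $k$ fixed, and therefore pays an unrecoverable $+1$ in degree --- which is precisely why you land on the \cite{Cao18} threshold $\lceil (n+k+1)/2 \rceil$ rather than the paper's $\lceil (n+k-1)/2 \rceil$. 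For the same reason your closing sharpness remark is wrong: $\lceil (n+k+1)/2 \rceil$ is not sharp for any parity of $n+k$, since Theorem~\ref{The:main4} shows a bound smaller by one suffices (the genuinely sharp examples are those of Proposition~\ref{pro5}). In exchange, your argument is a clean one-step reduction needing nothing beyond Theorem~\ref{The:main3} and already improves the order requirement from $Ck^{9}$ to $3k+1$, whereas the paper's induction buys both the optimal degree bound and the optimal order bound $n \geq k+1$.
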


In this paper, we will improve  Theorem~\ref{The:oto} by replacing ``$n \geq Ck^{9}$" with ``$n \geq k+1$" and replacing ``$\delta^{0}(D) \geq \lceil (n + k + 1) / 2\rceil$" with ``$\delta^{0}(D) \geq \lceil (n + k-1)/2 \rceil$", which is sharp when $n+k$ is odd and is sharp up to an additive constant 1 otherwise.

\begin{theorem} \label{The:main4}
Let $D$ be a digraph of order $n \geq k + 1$, where $k \geq 2$. If $\delta^{0}(D) \geq \lceil (n + k - 1)/2 \rceil$, then $D$ is one-to-one $k$-coverable.	
Moreover, the bound for $\delta^{0}(D)$ is sharp when $n+k$ is odd and is sharp up to an additive constant 1 otherwise.
\end{theorem}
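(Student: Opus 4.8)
The plan is to reduce the one-to-one $k$-DDPC problem to the one-to-many $k$-DDPC problem already settled in Theorem~\ref{The:main3}, by means of the following redirection device. Fix a source $s$ and a sink $t$ (so $s\neq t$). Since $\delta^{0}(D)\ge \lceil (n+k-1)/2\rceil$ and $n\ge k+1$, one checks $|N^{-}_{D}(t)\setminus\{s\}|\ge k-1$, so we may choose distinct vertices $x_{1},\dots,x_{k-1}\in N^{-}_{D}(t)\setminus\{s,t\}$ and set $T^{*}=\{t,x_{1},\dots,x_{k-1}\}$. Suppose we can produce a one-to-many $k$-DDPC $\{P_{1},\dots,P_{k}\}$ for $s$ and $T^{*}$, relabelled so that $P_{1}$ ends at $t$ and $P_{i}$ ends at $x_{i-1}$ for $i\ge 2$. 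Because these paths pairwise meet only in $s$ and cover $V(D)$, the vertex $t$ lies on $P_{1}$ alone; hence for each $i\ge 2$ we may append the arc $x_{i-1}t$ to get an $s$-$t$ path $P_{i}'$. The family $\{P_{1},P_{2}',\dots,P_{k}'\}$ is then a set of $k$ paths from $s$ to $t$ covering $V(D)$ and pairwise meeting only in $\{s,t\}$, that is, a one-to-one $k$-DDPC. So it suffices to find a one-to-many $k$-DDPC whose sink set contains the prescribed vertex $t$, the remaining $k-1$ sinks being ours to choose inside $N^{-}_{D}(t)$.

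When $n+k$ is even we have $\lceil (n+k-1)/2\rceil=\lceil (n+k)/2\rceil$, so for $n\ge 3k$ the hypothesis of Theorem~\ref{The:main3} holds on $D$ itself; applying that theorem to the sink set $T^{*}$ and redirecting as above settles this case at once.

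The main difficulty is the case $n+k$ odd, where the stated bound is claimed to be sharp. Here $\lceil (n+k-1)/2\rceil=\lceil (n+k)/2\rceil-1$, so Theorem~\ref{The:main3} cannot be quoted verbatim on $D$; moreover deleting $t$ to reduce the order also costs one unit of semi-degree (the vertices of $N^{-}_{D}(t)$ lose out-degree), so no black-box application recovers the exact bound. The plan is therefore to establish a strengthened one-to-many covering lemma, paralleling the method behind Theorems~\ref{The:main1} and~\ref{The:main3}, in the refined situation where one sink is pinned to $t$ while the other $k-1$ sinks may be selected freely within $N^{-}_{D}(t)$. The extra unit of freedom in choosing these sinks is exactly what should offset the one-unit parity deficit: concretely, I would first extract an $(s,T^{*})$-fan covering a large part of $D$ and then absorb the remaining vertices into the $k$ branches one at a time, using $\delta^{0}(D)\ge \lceil (n+k-1)/2\rceil$ to guarantee at each step an available in-neighbour and out-neighbour for rerouting, and spending the freedom in the unpinned sinks to avoid the single obstruction that the reduced degree would otherwise create. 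Proving that this absorption never stalls is the crux of the whole argument.

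Two further points complete the proof. First, the regime $k+1\le n<3k$, where Theorem~\ref{The:main3} is inapplicable, is handled by a direct argument (induction on $k$, or a greedy construction): there $\lceil (n+k-1)/2\rceil$ is a large fraction of $n$ -- at the extreme $n=k+1$ it forces $D=\overleftrightarrow{K}_{k+1}$ -- and the $n-2$ non-terminal vertices can be split into $k$ branches, at most one of them empty (the arc $st$), with the density supplying all the arcs $s\to\cdot$, $\cdot\to t$ and the internal links required. Second, for sharpness I would construct, when $n+k$ is odd, a digraph assembled from complete digraphs with a deficient set so that $\delta^{0}=\lceil (n+k-1)/2\rceil-1$ yet no one-to-one $k$-DDPC exists for a suitable pair $(s,t)$ -- the deficient set either blocking $k$ internally disjoint $s$-$t$ branches or leaving vertices that cannot all be inserted into such branches -- and then verify that the same family yields only the weaker, off-by-one conclusion when $n+k$ is even.
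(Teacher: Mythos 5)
Your reduction to Theorem~\ref{The:main3} is carried out only in the one regime where that theorem can be quoted as a black box, namely $n+k$ even and $n\ge 3k$, and that regime is the easy part of the statement (there the redirection device itself is sound: choosing $T^{*}=\{t,x_{1},\dots,x_{k-1}\}\subseteq N^{-}_{D}(t)\cup\{t\}$ and appending the arcs $x_{i}t$ does convert a one-to-many $k$-DDPC into a one-to-one $k$-DDPC). Everything else --- the case $n+k$ odd, which is precisely the case in which the bound is claimed to be sharp, and the entire range $k+1\le n<3k$ --- is left as a plan rather than a proof: you write yourself that ``proving that this absorption never stalls is the crux of the whole argument,'' but no absorption lemma is stated or proved, no induction for small $n$ is carried out, and the sharpness examples are only gestured at. So there is a genuine gap: the strengthened one-to-many covering lemma you postulate is exactly the hard content of the theorem, and it is not established.

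The gap is avoidable, because the decomposition you chose is the wrong one: routing through the one-to-many problem forces you to pay the one-unit parity deficit, and there is no way to recover it by a black-box citation. The paper instead reduces $k$ to $k-1$ \emph{within} the one-to-one problem. For $k\ge 3$, the counting bound $2\lceil (n+k-1)/2\rceil - n\ge 2$ yields a vertex $h\in N^{+}_{D}(s)\cap N^{-}_{D}(t)$; deleting $h$ drops the order and $k$ by one simultaneously, and since
\begin{equation*}
\lceil (n+k-1)/2\rceil - 1 \;=\; \lceil ((n-1)+(k-1)-1)/2\rceil ,
\end{equation*}
the hypothesis is reproduced exactly, with no parity loss and no lower bound on $n$ beyond $n\ge k+1$; the path $sht$ is then adjoined to a one-to-one $(k-1)$-DDPC of $D-h$. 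The base case $k=2$ follows from Theorem~\ref{The:n+1}: take a Hamiltonian $s$--$t$ path $P$, set $X=\{v^{-}\colon v\in N^{+}_{D}(s)\}$ and $Y=N^{-}_{D}(t)$, note $|X\cap Y|\ge 2$, and use a vertex $w\in X\cap Y$ to split $P$ into the two $s$--$t$ paths $sw^{+}Pt$ and $sPwt$. Sharpness likewise requires explicit constructions, which the paper gives: two copies of $\overleftrightarrow{K}_{(n+k-1)/2}$ glued along $k-1$ vertices when $n+k$ is odd (every $s$--$t$ path must cross the $(k-1)$-vertex overlap, so $k$ internally disjoint ones cannot exist), and two copies of $\overleftrightarrow{K}_{(n+k)/2-1}$ glued along $k-2$ vertices when $n+k$ is even. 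If you want to salvage your write-up, replace the one-to-many detour by this $k\to k-1$ induction; as submitted, the proposal proves strictly less than what Theorem~\ref{The:main3} already gives for free.
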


\section{Many-to-many $k$-DDPC problem}\label{sec:Proof1}

To prove Theorems~\ref{The:main1} and~\ref{The:2t2}, we need the following result.


\begin{theorem} \cite{MOL1976} \label{The:n+1}
Let $D$ be a digraph of order $n$. If $d^{+}_{D}(x) + d^{-}_{D}(y) \geq n + 1$ for each $xy\not\in A(D)$, then $D$ is Hamiltonian connected.
\end{theorem}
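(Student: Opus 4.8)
Since Theorem~\ref{The:n+1} is a classical Ore-type criterion, the plan is to argue by contradiction using the extremal method built around a longest prescribed-endpoint path, exactly the device that underlies Woodall's theorem for Hamiltonicity; the extra ``$+1$'' in the hypothesis (compared with the degree sum $n$ that forces a Hamiltonian cycle) should be precisely what is needed to upgrade Hamiltonicity to Hamiltonian-connectedness. So I would fix two distinct vertices $x, y \in V(D)$, assume for contradiction that $D$ has no $x$-$y$ Hamiltonian path, and choose a longest $x$-$y$ path $P = v_{0} v_{1} \cdots v_{m}$ with $v_{0} = x$ and $v_{m} = y$. Writing $W = V(D) \setminus V(P)$, the assumption that $P$ is not Hamiltonian gives $W \neq \emptyset$.

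The first batch of steps records what maximality of $P$ forbids. The basic observation is an \emph{insertion} principle: for every $w \in W$ and every index $i$ with $0 \le i \le m-1$, the arcs $v_{i} \to w$ and $w \to v_{i+1}$ cannot both be present, since otherwise replacing the arc $v_{i} v_{i+1}$ by the detour $v_{i}\, w\, v_{i+1}$ would yield a strictly longer $x$-$y$ path. I would then record the analogous statements obtained after \emph{rotating} $P$: any rotation that fixes the two endpoints $x$ and $y$ produces another longest $x$-$y$ path, to which the same insertion principle applies, thereby enlarging the collection of forbidden adjacency patterns between $W$ and $V(P)$. A parallel analysis handles arcs internal to $W$ (inserting a short $W$-segment rather than a single vertex) and the behaviour at the fixed ends $x$ and $y$, where no extension is possible because the endpoints are prescribed.

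The decisive step is the counting argument. Fixing a representative vertex $w \in W$ and a carefully chosen non-adjacent ordered pair, I would translate the insertion and rotation constraints into an upper bound on a degree sum $d^{+}_{D}(\cdot) + d^{-}_{D}(\cdot)$ taken over a non-arc of $D$: each arc $v_{i} v_{i+1}$ of $P$ can accommodate at most one of the two relevant incidences, and the rotations force the out- and in-neighbourhoods in question to avoid one another along $P$, so that the total number of admissible out- and in-arcs is capped at roughly $m \le n-1$. Comparing this with the hypothesis $d^{+}_{D}(\cdot) + d^{-}_{D}(\cdot) \ge n+1$ on the chosen non-arc yields the contradiction; the single unit by which the hypothesis exceeds the Hamiltonicity threshold is consumed exactly by the two endpoints $x, y$ being unavailable for the cycle-closing move used in the Hamiltonian case.

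I expect the main obstacle to be making this count tight and exhaustive rather than the individual ideas, which are standard. Concretely, one must verify that the insertion- and rotation-forbidden patterns are genuinely disjoint so that nothing is double counted, handle degenerate configurations such as $W$ inducing arcs, very short paths, or the pair $x, y$ itself being a non-arc, and choose the non-adjacent pair to which the degree hypothesis is applied so that the resulting bound comes out as $n$ rather than $n+1$. Getting the ``$+1$'' bookkeeping to land exactly, so that prescribing both endpoints costs precisely one more unit of degree sum than merely asking for a Hamiltonian cycle, is the delicate heart of the argument.
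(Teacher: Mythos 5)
The first thing to note is that the paper does not prove this statement at all: Theorem~\ref{The:n+1} is quoted from Overbeck-Larisch \cite{MOL1976} and used as a black box (it serves as the base case of Theorem~\ref{The:main1} and as the engine of Theorems~\ref{The:2t2} and~\ref{The:main4}). So your sketch can only be measured against the classical proof in the cited reference, not against anything in this paper; for the paper's purposes, citing \cite{MOL1976} is the intended ``proof''.

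As a proof attempt, your proposal has a genuine gap, and you have in effect named it yourself: the entire argument is deferred to a ``decisive counting step'' that is never carried out. Asserting that the insertion and rotation constraints ``translate into an upper bound of roughly $m \le n-1$'' on the degree sum of some non-adjacent pair is precisely the part that requires proof; everything before it is routine setup. Worse, the main tool you lean on --- rotations of the longest $x$-$y$ path that fix both endpoints --- is not available in digraphs in the form you need. In an undirected graph a rotation reverses a segment of the path; in a digraph the reversed segment traverses its arcs backwards, so the result is not a directed path. Endpoint-fixing exchanges in a digraph exist only under special crossing-arc configurations, and controlling those configurations is exactly why the known arguments (Overbeck-Larisch's original proof, or the Bondy--Thomassen multi-insertion technique used for Meyniel-type theorems) are substantially more involved than ``insertion plus counting''. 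The degenerate situations you flag (arcs inside $W$, the pair $x$, $y$ itself being a non-arc, very short paths) are likewise left unresolved rather than handled. In short, the skeleton is the right genre of argument, but the two load-bearing steps --- a directed substitute for rotation, and an exact count producing a non-adjacent pair whose degree sum is at most $n$ --- are missing, so the proposal does not constitute a proof of the theorem.
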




In Theorem~\ref{The:main1}, we will get a sharp minimum semi-degree sufficient condition for the unpaired many-to-many $k$-DDPC problem. The example for the sharpness of the bound will be given in Proposition~\ref{pro2}.

\vspace{2mm}

\noindent
{\bf Theorem~\ref{The:main1}.}
{\em  Let $D$ be a digraph of order $n \geq 3k$, where $k$ is a positive integer. If $\delta^{0}(D) \geq \lceil (n + k)/2 \rceil$, then $D$ is unpaired many-to-many $k$-coverable. Moreover, the bound for $\delta^{0}(D)$ is sharp.}	

\begin{proof}

We prove the result by induction on $k$. The base step of $k = 1$ holds by Theorem~\ref{The:n+1}. Now we assume that $k\geq 2$ and prove the inductive step. Let $S = \{s_{1}, s_{2}, \dots, s_{k}\}$ and $T = \{t_{1}, t_{2}, \dots, t_{k}\}$ be disjoint source set and sink set in $D$, respectively. Let $$S_{1} = S \setminus \{s_{k}\}, T_{1} = T \setminus \{t_{k}\}, X = V(D) \setminus\{s_{k}, t_{k}\}.$$ We construct a digraph $D'$ of order $n'(= n - 1)$ from $D[X]$ by adding one new vertex $r$ such that
$$N^{+}_{D'}(r) = N^{+}_{D}(s_{k}) \cap X, \ N^{-}_{D'}(r) = N^{-}_{D}(t_{k}) \cap X. $$ 
Observe that  $$d^{-}_{D'}(r) \geq d^{-}_{D}(t_{k}) - 1, d^{+}_{D'}(r) \geq d^{+}_{D}(s_{k}) - 1,$$ and 
for every $z \in X$, $$d^{-}_{D'}(z) \geq d^{-}_{D}(z) - 1, d^{+}_{D'}(z) \geq d^{+}_{D}(z) - 1.$$ 
Hence, 
$$\delta^{0}(D') \geq \lceil (n + k)/2 \rceil - 1 = \lceil (n + k - 2)/2 \rceil = \lceil (n' + k - 1)/2 \rceil.$$ 

By the induction hypothesis, $D'$ has an unpaired many-to-many $(k-1)$-DDPC, say $\mathbf{P}=\{P_{1}, \dots, P_{k-1}\}$, for $S_{1}$ and $T_{1}$, where each $P_{i}$ ($1 \leq i \leq k-1$) is an $s_{i}-t_{\sigma(i)}$ path for some permutation $\sigma$ on $\{1, 2, \dots, k-1\}$.
As $$n' = n - 1 \geq 3k - 1 \geq 2k + 1,$$ there exists a path $P_{i}$, say $P_{1}$, which contains $r$ as an inner vertex. Let $r^{+}$ (resp. $r^{-}$) be the successor (resp. predecessor) of $r$ on $P_{1}$. Let $$P^{\ast}_{1}:=s_{1}P_{1}r^{-}t_{k},  P_{k}:=s_{k}r^{+}P_{1}t_{\sigma(1)}.$$ Observe that $\{P^{\ast}_{1}, P_{2}, \dots , P_{k}\}$ is an unpaired many-to-many $k$-DDPC for $S$ and $T$, that is, $D$ is unpaired many-to-many $k$-coverable. \qedhere
\end{proof}

\vspace{2mm}

\begin{proposition}\label{pro2}
For every $k\geq 1$ and every $n \geq 3k + 1$, there exists a digraph $D$ on $n$ vertices with minimum semi-degree $\lceil (n+k) / 2 \rceil - 1$ which is not unpaired many-to-many $k$-coverable. 
\end{proposition}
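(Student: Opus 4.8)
The plan is to exhibit a single extremal digraph realizing the bound. I would take $D$ to be obtained from the complete digraph $\overleftrightarrow{K}_n$ by deleting all arcs inside a fixed set $Y$ of $p := \lfloor (n-k)/2 \rfloor + 1$ vertices; equivalently, writing $A := V(D) \setminus Y$, the set $A$ induces a complete digraph $\overleftrightarrow{K}_{|A|}$, the set $Y$ induces an empty digraph $E_p$, and both arcs $uv$ and $vu$ are present for every $u \in A$ and $v \in Y$. Note $|A| = n - p = \lceil (n+k)/2\rceil - 1$. The first step is to record the degrees: every vertex of $Y$ has in- and out-degree exactly $|A|$, while every vertex of $A$ has in- and out-degree $n-1$. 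Since $p \ge 1$ gives $|A| \le n-1$, this yields $\delta^{0}(D) = |A| = \lceil (n+k)/2\rceil - 1$, exactly the claimed value.

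Next I would use the hypothesis $n \ge 3k+1$ to check that $|A| \ge 2k$, so that one can choose disjoint source and sink sets $S = \{s_{1},\dots,s_{k}\}$ and $T = \{t_{1},\dots,t_{k}\}$ inside $A$; set $A' := A \setminus (S \cup T)$ and $u := |A'| = |A| - 2k$. The heart of the argument is a capacity bound forced by $Y$ being an independent set. In any many-to-many $k$-DDPC for this $S$ and $T$, the $2k$ vertices of $S \cup T$ are precisely the $2k$ endpoints of the $k$ paths, so every interior vertex of every path lies in $Y \cup A'$, and in particular every vertex of $Y$ and of $A'$ is interior. Because $Y$ is independent, no two vertices of $Y$ are consecutive on a path, so among the interior vertices of any single path the number lying in $Y$ exceeds the number lying in $A'$ by at most $1$. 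Summing over the $k$ paths gives $|Y| \le |A'| + k$, that is, $p \le u + k$.

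It then remains a short computation to see that this inequality is violated by our parameters, so that no such cover exists. Substituting $u = \lceil (n+k)/2\rceil - 1 - 2k$, the desired contradiction $p \ge u + k + 1$ reduces to $\lceil (n+k)/2\rceil - \lfloor (n-k)/2\rfloor \le k+1$, which I would verify by splitting on the parity of $n+k$: the left-hand side equals $k$ when $n+k$ is even and $k+1$ when $n+k$ is odd. Hence $p > u + k$ holds in every case (with equality $p = u+k+1$ exactly in the odd case), contradicting the capacity bound. Therefore $D$ admits no many-to-many $k$-DDPC for this choice of $S$ and $T$, so $D$ is not unpaired many-to-many $k$-coverable.

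I expect the only genuine bookkeeping obstacle to be the floor/ceiling arithmetic that simultaneously pins $\delta^{0}(D)$ to $\lceil (n+k)/2\rceil - 1$ and keeps the capacity bound $p \le u+k$ violated; the hypothesis $n \ge 3k+1$ is exactly what makes both work, since at $n = 3k+1$ one has $u = 0$ and $S \cup T = A$, the tightest case, whereas $n = 3k$ would already force $|A| = 2k-1 < 2k$ and leave no room for disjoint source and sink sets of size $k$. The combinatorial core — that an independent set of $p$ vertices can be threaded through $k$ paths only when at least $p-k$ non-$Y$ separators are available as interior vertices — is immediate once the endpoints are pinned to $S \cup T \subseteq A$, so I anticipate no difficulty there.
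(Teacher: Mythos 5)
Your proof is correct, but it takes a partially different route from the paper's. The paper splits on the parity of $n+k$ and uses two different extremal digraphs: for $n+k$ odd it uses exactly your construction (the join of $\overleftrightarrow{K}_{(n+k-1)/2}$ with the empty digraph $E_{(n-k+1)/2}$, which matches your parameters $|A|=\lceil (n+k)/2\rceil -1$ and $p=\lfloor (n-k)/2\rfloor +1$ in that parity), but for $n+k$ even it switches to a different digraph: the union of two complete digraphs $D[A]\cong D[B]\cong \overleftrightarrow{K}_{(n+k)/2}$ overlapping in exactly $k$ vertices, with the sources placed in $A\setminus B$ and the sink set taken to be all of $A\cap B$; since every vertex of $A\cap B$ is a sink (hence a path endpoint) and $A\cap B$ separates $A\setminus B$ from $B\setminus A$, no path can ever enter $B\setminus A$, which is left uncovered. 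Your single join construction handles both parities at once, at the price of the floor/ceiling bookkeeping you carry out (which checks out: $|A|=\lceil (n+k)/2\rceil-1\ge 2k$ precisely because $n\ge 3k+1$, and $p-(u+k)$ equals $2$ in the even case and $1$ in the odd case); the paper's two-case split keeps the arithmetic trivial at the price of a second construction and a second non-coverability argument. A further difference is rigor of the counting: where the paper's odd case says only that ``it is not hard to check'' that the $k$ paths cover at most $(n+k-1)/2-k$ vertices of the independent part, your per-path capacity bound (at most one more interior $Y$-vertex than interior $A'$-vertices per path, since consecutive $Y$-vertices must be separated and both endpoints lie in $A$; summing gives $|Y|\le |A'|+k$) supplies exactly that missing justification, so on this point your write-up is more complete than the paper's.
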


\begin{proof}    

The proof is divided into the following two cases according to the parity of $n+k$. 
\begin{cas}
$n+k$ is even.
\end{cas}
Let $D$ be a digraph of order $n \geq 3k$ such that $$V(D)=A\cup B, |A\cap B|=k, D[A]\cong D[B]\cong \overleftrightarrow{K}_{(n+k)/2}.$$ Observe that $$\delta^{0}(D) =(n+k)/2 - 1=\lceil (n+k) / 2 \rceil - 1.$$
Let $$S=\{s_{i} \colon 1 \leq i \leq k\} \subseteq A \setminus B, T=A \cap B=\{t_{i} \colon 1 \leq i \leq k\}.$$ 
It can be checked that for any permutation $\sigma$ on $\{1, 2, \dots, k\}$, any set of disjoint paths $\{P_i \colon 1 \leq i \leq k\}$ does not cover vertices of $B\setminus A$, where $P_i$ is an $s_i-t_{\sigma(i)}$ path. Hence, there is no unpaired many-to-many $k$-DDPC for $S$ and $T$ in $D$, that is, $D$ is not unpaired many-to-many $k$-coverable. 
\begin{cas}
	$n+k$ is odd.
	\end{cas}
Let $D_{1}\cong \overleftrightarrow{K}_{(n+k-1)/2}$ and $D_{2}\cong E_{(n-k+1)/2}$.
Let $D$ be a new digraph of order $n \geq 3k+1$ such that 	
$V(D) = V(D_{1}) \cup V(D_{2})$ and 
$$A(D) = A(D_{1}) \cup A(D_{2}) \cup \{xy, yx \colon x \in V(D_{1}), \, y \in V(D_{2})\}.$$
Observe that $$\delta^{0}(D) =(n+k-1)/2 = \lceil (n+k) / 2 \rceil - 1$$ and $$|V(D_1)|=(n+k-1)/2\geq (3k+1+k-1)/2\geq 2k.$$
Let $S=\{s_{1}, s_{2}, \dots, s_{k}\}$ and $T=\{t_{1}, t_{2}, \dots, t_{k}\}$ be two disjoint subsets of $V(D_{1})$. It is not hard to check that for any permutation $\sigma$ on $\{1, 2, \dots, k\}$, a set of disjoint paths $\{P_i \colon 1 \leq i \leq k\}$ covers at most $$(n+k-1)/2-k=(n-k-1)/2(<|V(D_2)|)$$ vertices of $V(D_{2})$, where $P_i$ is an $s_i-t_{\sigma(i)}$ path. Hence, there is no unpaired many-to-many $k$-DDPC for $S$ and $T$ in $D$, that is, $D$ is not unpaired many-to-many $k$-coverable.  
\end{proof}


If the condition ``$n \geq 3k$" is replaced by ``$n=2k$", then the bound ``$\lceil (n + k)/2 \rceil$" for $\delta^{0}(D)$ in Theorem~\ref{The:main1} can be reduced to ``$\lceil (n + k)/2 \rceil - 1$", which is still sharp. The example for the sharpness of the bound will be given in Proposition~\ref{pro3}. We also prove that, when $n=2k$, a complete bipartite regular digraph $D$ with order $n$  is unpaired many-to-many $k$-coverable.

\vspace{2mm}

\noindent
{\bf Theorem~\ref{The:main2}.}
{\em The following assertions hold:
\begin{description}
\item[(i)]~Let $D$ be a digraph of order $n = 2k$, where $k$ is a positive integer. If $\delta^{0}(D) \geq \lceil (n + k)/2 \rceil - 1$, then $D$ is unpaired many-to-many $k$-coverable.	Moreover, the bound for $\delta^{0}(D)$ is sharp.
\item[(ii)]~The digraph $\overleftrightarrow{K}_{m,m}$ $(m \geq 2)$ is unpaired many-to-many $m$-coverable, but it is not unpaired many-to-many $k$-coverable when $1 \leq k < m$.
\end{description}}	

\begin{proof}
\noindent{\bf Part (i)}. When $n = 2k$, $\delta^{0}(D) \geq \lceil (n + k)/2 \rceil - 1 = \lceil 3k/2 \rceil - 1$. The base case that $k = 1$ is clear. Now we assume that $k \geq 2$. Let $S = \{s_{1}, s_{2}, \dots, s_{k}\}$  and $T = \{t_{1}, t_{2}, \dots, t_{k}\}$ be any two disjoint subsets of $V(D)$. 
Let $D_{1}=D[S]$ and $D_{2}=D[T]$. By deleting $A(D_{1})$, $A(D_{2})$ and the arcs from $D_{2}$ to $D_{1}$, we get a spanning subdigraph, say $D_{3}$, of $D$. Observe that $D_{3}$ is a bipartite digraph which only contains the arcs from $S$ to $T$ of $D$. 
Let $G$ be the underlying graph of $D_3$. 

We claim that $\lvert N_{G}(Y) \rvert \geq \lvert Y \rvert$ for all $Y \subseteq S$, where $N_{G}(Y)$ denotes the set of neighbours of vertices of $Y$ in $G$.
Suppose that there is a subset $Z \subseteq S$ such that $\lvert N_{G}(Z) \rvert < \lvert Z \rvert$. Note that for any $s_{i} \in S$, 
$$d_{G}(s_{i}) = d^{+}_{T}(s_{i}) \geq \lceil 3k/2 \rceil - 1 - (k - 1) = \lceil k/2 \rceil,$$ 
and thus $\lvert Z \rvert > \lvert N_{G}(Z) \rvert \geq \lceil k/2 \rceil$, which means that $\lvert Z \rvert \geq \lceil k/2 \rceil + 1$. For any $t_{j} \in T \setminus N_{G}(Z)$, we have $$d_{G}(t_{j}) = d^{-}_{S}(t_{j}) \leq k - \lvert Z \rvert \leq k - 
\lceil k/2 \rceil - 1 = \lfloor k/2\rfloor - 1$$ 
by definitions of $Z$ and $N_{G}(Z)$. Thus, we have 
$$d^{-}_{D}(t_{j}) = d^{-}_{S}(t_{j}) + d^{-}_{T}(t_{j}) \leq \lfloor k/2\rfloor - 1 + k-1 \leq \lfloor 3k/2\rfloor - 2,$$ 
which contradicts with the fact that $\delta^{0}(D) \geq \lceil 3k/2 \rceil - 1$. Thus, $\lvert N_{G}(Y) \rvert \geq \lvert Y \rvert$ for all $Y \subseteq S$, this means that $G$ contains a perfect matching $M$, from which we can obtain a perfect matching $M'$ of $D_3$, which is also an unpaired many-to-many $k$-DDPC  from $S$ to $T$ in $D$.  Hence, $D$ is unpaired many-to-many $k$-coverable.

\noindent{\bf Part (ii)}. Let $X$, $Y$ be the bipartition sets of $V(\overleftrightarrow{K}_{m,m})$. We first consider the case that $k=m$. Let $S$ and $T$ be a pair of disjoint source set and sink set, respectively, such that $|S|=|T|=m$. Let $$S_{X}=S\cap X, \ T_{X}=T\cap X, \ S_{Y}=S\cap Y, \ T_{Y}=T\cap Y.$$ Observe that $\lvert S_{X} \rvert = \lvert T_{Y} \rvert$ and $\lvert T_{X} \rvert = \lvert S_{Y} \rvert$. The subdigraph induced by $S_{X} \cup T_{Y}$ is a complete bipartite regular digraph which has a perfect matching $M_{1}$ such that each arc $e\in M_{1}$ is from $S_{X}$ to $T_{Y}$. Similarly, the subdigraph induced by $T_{X} \cup S_{Y}$ is a complete bipartite regular digraph which has a perfect matching $M_{2}$ such that each arc $e\in M_{2}$ is from $S_{Y}$ to $T_{X}$. Clearly, $M_{1} \cup M_{2}$ constitutes an unpaired many-to-many $m$-DDPC for $S$ and $T$. Hence, $D$ is unpaired many-to-many $m$-coverable.

 We next consider the case that $1 \leq  k < m$.  Suppose that $\overleftrightarrow{K}_{m,m}$ is unpaired many-to-many $k$-coverable, that is, there exists an unpaired many-to-many $k$-DDPC, say $\{P_i: 1\leq i\leq k\}$, for any disjoint source set $S=\{s_{1}, s_{2}, \dots, s_{k}\}$ and sink set $T=\{t_{1}, t_{2}, \dots, t_{k}\}$. We consider the case that  $(T\cup \{s_k\}) \subseteq Y$ and  $(S\setminus \{s_{k}\})\subseteq X$, and will get a contradiction in this case. Without loss of generality, we assume that $P_{k}$ starts at $s_{k}$. By the fact that $(T\cup \{s_k\}) \subseteq Y$, we have $|V(P_k)\cap X|+1=|V(P_k)\cap Y|$. For each $1 \leq i \leq k - 1$, we have $|V(P_i)\cap X|=|V(P_i)\cap Y|$. By the definition of an unpaired many-to-many $k$-DDPC,
 we have $$|X|+1=\sum_{i=1}^k{|V(P_i)\cap X|}+1=\sum_{i=1}^k{|V(P_i)\cap Y|}=|Y|,$$
which contradicts with the fact that $|X|=|Y|$. Therefore, $\overleftrightarrow{K}_{m,m}$ $(m \geq 2)$ is not unpaired many-to-many $k$-coverable when $1 \leq k < m$.
\qedhere
\end{proof}


\vspace{2mm}

\begin{proposition}\label{pro3}
    
For every $k\geq 2$ and every $n = 2k$, there exists a digraph $D$ on $n$ vertices with minimum semi-degree $\lceil (n+k) / 2 \rceil - 2$ which is not unpaired many-to-many $k$-coverable.
\end{proposition}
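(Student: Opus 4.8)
The plan is to exploit the fact that when $n = 2k$ the sources and sinks already exhaust the vertex set. Since $V(D) = S \cup T$ with $S, T$ disjoint and $|S| = |T| = k$, any unpaired many-to-many $k$-DDPC is a family of $k$ pairwise vertex-disjoint paths, each having a distinct source as its start and a distinct sink as its end, hence each having at least two vertices; as the $k$ paths are disjoint and cover all $2k$ vertices, $\sum_i |V(P_i)| = 2k$ forces every path to have exactly two vertices, i.e. to be a single arc from $S$ to $T$. Thus, for this particular choice of $S$ and $T$, $D$ is unpaired many-to-many $k$-coverable if and only if the bipartite structure of arcs directed from $S$ to $T$ contains a perfect matching. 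So, recalling $n + k = 3k$, it suffices to build a digraph that destroys Hall's condition on these $S \to T$ arcs while keeping $\delta^0(D) = \lceil 3k/2 \rceil - 2$.

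Concretely, I would set $D := \overleftrightarrow{K}_{2k} - F$, where $F$ is the set of all arcs directed from a vertex set $Z$ to a disjoint vertex set $W$ with $|Z| = \lceil k/2 \rceil$ and $|W| = \lfloor k/2 \rfloor + 1$; since $|Z| + |W| = k + 1 \le 2k$, such disjoint sets fit among the $2k$ vertices. I then designate the source and sink sets so that $Z \subseteq S$ and $W \subseteq T$, completing $S$ and $T$ to size $k$ arbitrarily (possible because $S$ needs $\lfloor k/2 \rfloor$ and $T$ needs $\lceil k/2 \rceil - 1$ further vertices, totalling the $k-1$ vertices lying outside $Z \cup W$).

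The verification is then two short computations. For the degrees, deleting $F$ from $\overleftrightarrow{K}_{2k}$ lowers only the out-degrees of the vertices in $Z$ (by $|W| = \lfloor k/2 \rfloor + 1$) and the in-degrees of the vertices in $W$ (by $|Z| = \lceil k/2 \rceil$), leaving every other in-/out-degree equal to $2k - 1$. This yields $d^{+}_{D}(z) = 2k - 1 - \lfloor k/2 \rfloor - 1 = \lceil 3k/2 \rceil - 2$ for $z \in Z$ and $d^{-}_{D}(w) = 2k - 1 - \lceil k/2 \rceil = \lfloor 3k/2 \rfloor - 1$ for $w \in W$; since $\lceil 3k/2 \rceil - 2 \le \lfloor 3k/2 \rfloor - 1 \le 2k - 1$, the minimum semi-degree is exactly $\lceil 3k/2 \rceil - 2 = \lceil (n+k)/2 \rceil - 2$. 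For the obstruction, the out-neighbourhood of $Z$ inside $T$ is precisely $T \setminus W$, of size $k - (\lfloor k/2 \rfloor + 1) = \lceil k/2 \rceil - 1 < |Z|$, so Hall's condition fails and no perfect matching from $S$ to $T$ exists; by the reduction above, $D$ is not unpaired many-to-many $k$-coverable.

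The only delicate point is the simultaneous calibration of the two cardinalities: forcing $|N^{+}(Z) \cap T| < |Z|$ pushes $|W|$ up, while keeping the dropped degree at exactly $\lceil 3k/2 \rceil - 2$ (and not below) constrains $\max(|Z|,|W|)$ from above, and these pull against each other. The values $|Z| = \lceil k/2 \rceil$, $|W| = \lfloor k/2 \rfloor + 1$ sit at the balance point for both parities of $k$, and one should sanity-check the smallest case $k = 2$, where $|W| = 2$ and $T \setminus W = \emptyset$, so $Z$ has no out-arc into $T$ at all and the Hall violation is immediate.
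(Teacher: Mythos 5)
Your proof is correct, but your construction differs from the paper's. Both arguments rest on the same key reduction, which you make explicit and the paper uses implicitly: when $n=2k$ the $k$ disjoint paths must each have exactly two vertices, so an unpaired many-to-many $k$-DDPC for $S$ and $T$ exists if and only if the arcs directed from $S$ to $T$ contain a perfect matching. Where you diverge is the extremal digraph. The paper splits into two cases by the parity of $k$ and, in each, takes two complete digraphs $D[A]\cong D[B]\cong \overleftrightarrow{K}$ overlapping in $k-1$ (resp.\ $k-2$) vertices, placing $S$ and $T$ so that the number of pairwise vertex-disjoint $S\to T$ arcs is at most $k-1$ (resp.\ $k-2$); this matches the style of the paper's other sharpness examples (Propositions 2, 4, 5), giving a uniform family of extremal constructions throughout. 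Your digraph $\overleftrightarrow{K}_{2k}$ minus all arcs from $Z$ to $W$, with $|Z|=\lceil k/2\rceil$ and $|W|=\lfloor k/2\rfloor+1$, handles both parities at once, and your verification is cleaner: a two-line degree count showing $\delta^0(D)=\lceil 3k/2\rceil-2$ exactly, plus the Hall-type obstruction $|N^+(Z)\cap T|=\lceil k/2\rceil-1<|Z|$ (only the trivial direction of Hall's condition is needed, since you are ruling a matching out rather than producing one). Your calibration of $|Z|$ and $|W|$ against the degree constraints is tight and checks out for both parities, including the boundary case $k=2$.
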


\begin{proof}
    
The proof is divided into the following two cases according to the parity of $k$.

\begin{case 5}
$k$ is odd.
\end{case 5}

Let $D$ be a digraph of order $n = 2k$ such that 
$$V(D)=A\cup B, |A\cap B|=k-1, D[A]\cong D[B]\cong \overleftrightarrow{K}_{(n+k-1)/2}.$$
Observe that $$\delta^{0}(D)= \lceil (n+k) / 2 \rceil - 2= (3k-1) / 2 - 1$$ and 
$$\lvert A \setminus B \rvert = \lvert B\setminus A \rvert = (3k-1) / 2 - (k-1) = (k+1) / 2.$$
Let $$A \setminus B = \{s_{1}, s_{2}, \dots, s_{(k+1)/2}\}, \ B \setminus A = \{t_{1}, t_{2}, \dots, t_{(k+1)/2}\}.$$ As now $k \geq 3$, we have $(k-1)/2\geq 1$. Choose~$(k-1)/2$ vertices from $A \cap B$ and denote them as $s_{(k+3)/2}, s_{(k+5)/2}, \dots, s_{k}$. The remaining $(k-1)/2$ vertices in $A \cap B$ are denoted as $t_{(k+3)/2}, t_{(k+5)/2}, \dots, t_{k}$. Let $$S=(A \setminus B) \cup \{s_{(k+3)/2}, s_{(k+5)/2}, \dots, s_{k}\}$$ and $$T=(B \setminus A) \cup \{t_{(k+3)/2}, t_{(k+5)/2}, \dots, t_{k}\}.$$ 

Suppose that there is an unpaired many-to-many $k$-DDPC, say $\mathbf{P}$, for $S$ and $T$ in $D$. Observe that $\mathbf{P}$ uses at most $(k-1)/2$ vertex-disjoint arcs from $A \setminus B$ to $\{t_{(k+3)/2}, t_{(k+5)/2}, \dots, t_{k}\}$ and at most $(k-1)/2$ vertex-disjoint arcs from $\{s_{(k+3)/2}, s_{(k+5)/2}, \dots, s_{k}\}$ to $B\setminus A$, that is, it uses at most $k-1$ vertex-disjoint arcs from $S$ to $T$, a contradiction.
Hence, $D$ is not unpaired many-to-many $k$-coverable.

\begin{case 6}
$k$ is even.
\end{case 6}

Let $D$ be a digraph of order $n = 2k$ such that
$$V(D)=A\cup B, |A\cap B|=k-2, D[A]\cong D[B]\cong \overleftrightarrow{K}_{(n+k)/2-1}.$$ Observe that
$$\delta^{0}(D)=\lceil (n+k) / 2 \rceil - 2= (3k) / 2 - 2$$
and 
$$\lvert A \setminus B \rvert = \lvert B\setminus A \rvert = (3k) / 2 - 1 - (k-2) = k/2 + 1.$$
Let 
$$A \setminus B = \{s_{1}, s_{2}, \dots, s_{k/2+1}\}, \ B \setminus A = \{t_{1}, t_{2}, \dots, t_{k/2+1}\}.$$ Choose $k/2-1$ vertices from $A \cap B$ and denote them as $s_{k/2+2}, s_{k/2+3}, \dots, s_{k}$, the remaining $k/2-1$ vertices in $A \cap B$ are denoted as $t_{k/2+2}, t_{k/2+3}, \dots, t_{k}$. Let 
$$S=(A \setminus B) \cup \{s_{k/2+2}, s_{k/2+3}, \dots, s_{k}\}, \ T=(B \setminus A) \cup \{t_{k/2+2}, t_{k/2+3}, \dots, t_{k}\}.$$ 

Suppose that there is an unpaired many-to-many $k$-DDPC, say $\mathbf{P}$, for $S$ and $T$ in $D$. Observe that $\mathbf{P}$ uses at most $k/2-1$ vertex-disjoint arcs from $A \setminus B$ to $\{t_{k/2+2}, t_{k/2+3}, \dots, t_{k}\}$ and at most $k/2-1$ vertex-disjoint arcs from $\{s_{k/2+2}, s_{k/2+3}, \dots, s_{k}\}$ to $B \setminus A$, that is, it uses at most $k-2$ vertex-disjoint arcs from $S$ to $T$, a contradiction. Hence,  $D$ is not unpaired many-to-many $k$-coverable. \qedhere

\end{proof}


\vspace{2mm}


Now we turn our attention to the paired many-to-many $k$-DDPC problem. In Theorem~\ref{The:2t2}, we will obtain a sharp Ore-type degree condition for the paired many-to-many $2$-DDPC problem. The example for the sharpness of the bound will be given in Proposition~\ref{Pro:2t2}.

\vspace{2mm}

\noindent
{\bf Theorem~\ref{The:2t2}.}
{\em  Let $D$ be a digraph of order $n$. If $d^{+}_{D}(x) + d^{-}_{D}(y) \geq n + 2$ for each $xy\not\in A(D)$, then $D$ is paired many-to-many $2$-coverable. Moreover, the bound for $d^{+}_{D}(x) + d^{-}_{D}(y)$ is sharp.}	


\begin{proof}
Let $S = \{s_{1}, s_{2}\}$ and $T = \{t_{1}, t_{2}\}$ be any disjoint source set and sink set of $V(D)$, respectively. Let $X = V(D)\setminus \{s_{2}, t_{1}\}$. We now construct a digraph $D_{1}$ of order $n_{1}(=n-1)$ by adding a new vertex $w$ such that
$$N^{+}_{D_{1}}(w) = N^{+}_{D}(s_{2}) \cap X, \ N^{-}_{D_{1}}(w) = N^{-}_{D}(t_{1}) \cap X.$$ 
For each $xy \notin A(D_{1})$, we have $$d^{+}_{D_{1}}(x) + d^{-}_{D_{1}}(y) \geq d^{+}_{D}(x) - 1 + d^{-}_{D}(y) - 1 \geq n + 2 - 2 = n = n_{1} + 1.$$ For each $x \in X \setminus N^{-}_{D_{1}}(w)$, we have $$d^{+}_{D_{1}}(x) + d^{-}_{D_{1}}(w) \geq d^{+}_{D}(x) - 1 + d^{-}_{D}(t_{1}) - 1 \geq n + 2 - 2 = n = n_{1} + 1.$$ Similarly, for each $y \in X \setminus N^{+}_{D_{1}}(w)$, we have $$d^{+}_{D_{1}}(w) + d^{-}_{D_{1}}(y) \geq d^{+}_{D}(s_{1}) - 1 + d^{-}_{D}(y) - 1 \geq n + 2 - 2 = n = n_{1} + 1.$$ 
Therefore, we conclude that $$d^{+}_{D_1}(x) + d^{-}_{D_1}(y) \geq n_1 + 1=|V(D_1)|+1$$ for each $xy\not\in A(D_1)$.

By Theorem~\ref{The:n+1}, $D_{1}$ is Hamiltonian connected, and so there is a Hamiltonian path, say $P'$, from $s_{1}$ to $t_{2}$ in $D_{1}$. Let $w^{+}$ (resp. $w^{-}$) denote the successor (resp. predecessor) of $w$ on $P'$. Let $$P_{1}:=s_{1}P'w^{-}t_{1}, P_{2}:=s_{2}w^{+}P't_{2}.$$ Observe that $\{P_{1}, P_{2}\}$ is a paired many-to-many $2$-DDPC for $S$ and $T$, that is, $D$ is paired many-to-many $2$-coverable. \qedhere
\end{proof}


\vspace{2mm}

\begin{proposition} \label{Pro:2t2}
    
There exists a digraph $D$ on $n \geq 9$ vertices with $d^{+}_{D}(x) + d^{-}_{D}(y)= n + 1$ for each $xy\not\in A(D)$, which is not paired many-to-many $2$-coverable. 
\end{proposition}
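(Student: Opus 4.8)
The plan is to run the reduction from the proof of Theorem~\ref{The:2t2} backwards, seeded by the extremal digraph for the Hamiltonian-connectivity result (Theorem~\ref{The:n+1}). The balanced complete bipartite digraph $\overleftrightarrow{K}_{m,m}$, with bipartition $(P,Q)$ and order $2m$, satisfies $d^{+}(x)+d^{-}(y)=2m$ for every non-arc $xy$ (such pairs lie inside one part), which is exactly one short of the threshold $|V|+1$ of Theorem~\ref{The:n+1}; moreover it has no Hamiltonian path joining two vertices of the same part, since an alternating $P$-$Q$-$P$-$\cdots$-$P$ path would need $|P|=|Q|+1$. Thus $\overleftrightarrow{K}_{m,m}$ is precisely the obstruction on which the merging trick of Theorem~\ref{The:2t2} must break down once its hypothesis is relaxed from $n+2$ to $n+1$. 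I therefore aim to build $D$ on $n=2m+1$ vertices so that contracting a suitable source $s_{2}$ and sink $t_{1}$ into one vertex $w$ recovers $D_{1}\cong\overleftrightarrow{K}_{m,m}$, with the remaining terminals $s_{1},t_{2}$ placed in one common part of its bipartition.

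Concretely, I would take $V(D)=P\cup R$ with $|P|=m$ and $|R|=m+1$, put $s_{1},t_{2}\in P$ and $s_{2},t_{1}\in R$, write $Q'=R\setminus\{s_{2},t_{1}\}$, and let $D$ be the complete bipartite digraph $\overleftrightarrow{K}_{m,m+1}$ between $P$ and $R$ augmented inside $R$ by the arcs $\{qs_{2}:q\in Q'\}\cup\{t_{1}q:q\in Q'\}\cup\{s_{2}t_{1},t_{1}s_{2}\}$. This makes $s_{2}$ receive an arc from every other vertex and $t_{1}$ send an arc to every other vertex, so the only non-arcs of $D$ are the pairs inside $P$, the pairs inside $Q'$, the pairs $s_{2}q$ with $q\in Q'$, and the pairs $qt_{1}$ with $q\in Q'$. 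Writing $X=V(D)\setminus\{s_{2},t_{1}\}$ one checks $N^{+}_{D}(s_{2})\cap X=P=N^{-}_{D}(t_{1})\cap X$ and $D[X]\cong\overleftrightarrow{K}_{m,m-1}$, so the contraction of $s_{2},t_{1}$ used in the proof of Theorem~\ref{The:2t2} returns exactly $\overleftrightarrow{K}_{m,m}$ with parts $P$ and $Q'\cup\{w\}$.

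Two verifications then remain. For non-coverability: if $\{P_{1},P_{2}\}$ were a paired many-to-many $2$-DDPC for $S=\{s_{1},s_{2}\}$, $T=\{t_{1},t_{2}\}$, with $P_{1}$ an $s_{1}$-$t_{1}$ path and $P_{2}$ an $s_{2}$-$t_{2}$ path, then letting $w^{-}$ be the predecessor of $t_{1}$ on $P_{1}$ and $w^{+}$ the successor of $s_{2}$ on $P_{2}$, and identifying $s_{2},t_{1}$ with $w$, the concatenation $s_{1}P_{1}w^{-}ww^{+}P_{2}t_{2}$ would be a Hamiltonian $s_{1}$-$t_{2}$ path of $D_{1}\cong\overleftrightarrow{K}_{m,m}$; as $s_{1},t_{2}\in P$ this contradicts the parity obstruction above. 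The arcs $w^{-}w$ and $ww^{+}$ are legitimate because $w^{-}\in N^{-}_{D}(t_{1})\setminus\{s_{2}\}\subseteq P$ and $w^{+}\in N^{+}_{D}(s_{2})\setminus\{t_{1}\}\subseteq P$. For the degree condition: a direct count gives $d^{+}(p)=d^{-}(p)=m+1$ for $p\in P$, $d^{+}(q)=d^{-}(q)=m+1$ for $q\in Q'$, and $d^{+}(s_{2})=m+1=d^{-}(t_{1})$, so each of the four non-arc families has degree sum exactly $2m+2=n+1$; crucially the large values $d^{-}(s_{2})$ and $d^{+}(t_{1})$ never enter a non-arc, which is exactly why the bound is attained with equality everywhere.

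The main obstacle is the rigidity of forcing $d^{+}(x)+d^{-}(y)=n+1$ on \emph{every} non-arc at once: the augmentation must lift the four non-arc families to the common value $n+1$ without overshooting, which essentially pins down the above choice and exploits the parity $n=2m+1$ (so that $2|R|=n+1$). This already yields examples for all odd $n\geq 9$, which suffices to witness the sharpness of Theorem~\ref{The:2t2}. For even $n$ the same scheme applied to the unbalanced seed $\overleftrightarrow{K}_{m-1,m}$ (which fails Hamiltonian-connectivity between two vertices of its smaller part) still produces a non-$2$-coverable digraph by the identical contraction argument; the only genuinely delicate point is that the smaller part now carries degree sum $n+2$, so one must trim one out-arc from each such vertex and steer the deficit onto vertices whose in-degree is tuned to $n+1-m$, keeping all non-arc sums pinned at $n+1$.
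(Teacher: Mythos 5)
Your odd-$n$ construction is correct, and it takes a genuinely different route from the paper's. Writing $n=2m+1$, the four non-arc families of your $D$ (inside $P$, inside $Q'$, the pairs $s_{2}q$, the pairs $qt_{1}$) all have degree sum exactly $(m+1)+(m+1)=n+1$, since $d^{+}(p)=d^{-}(p)=m+1$ for $p\in P$, $d^{+}(q)=d^{-}(q)=m+1$ for $q\in Q'$, $d^{+}(s_{2})=d^{-}(t_{1})=m+1$, and the large values $d^{-}(s_{2})=d^{+}(t_{1})=2m$ never meet a non-arc; moreover the contraction of $\{s_{2},t_{1}\}$ into $w$ yields exactly $\overleftrightarrow{K}_{m,m}$ with $s_{1},t_{2}$ in one part, disjointness of $P_{1},P_{2}$ ensures $w^{-}\neq s_{2}$ and $w^{+}\neq t_{1}$, and the spliced path $s_{1}P_{1}w^{-}ww^{+}P_{2}t_{2}$ contradicts the bipartite parity count. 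So your obstruction is a parity obstruction, obtained by running the reduction in the proof of Theorem~\ref{The:2t2} backwards from the extremal digraph of Theorem~\ref{The:n+1}. The paper instead builds $V(D)=A\cup B\cup\{s_{1},s_{2},t_{1},t_{2}\}$ with $A\cap B=\{z\}$ and wires the terminals so that any $s_{1}$-$t_{1}$ path avoiding $\{s_{2},t_{2}\}$ and any $s_{2}$-$t_{2}$ path avoiding $\{s_{1},t_{1}\}$ must both pass through the single shared vertex $z$: a cut-vertex obstruction (the digraph is not even $2$-linked for this pairing), which works uniformly for every $n\geq 9$ via the free parameter $m$.

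That uniformity is where your proposal has a genuine gap: the even-$n$ case. With the unbalanced seed ($|P|=m-1$, $|R|=m+1$, $n=2m$) one gets $d^{+}(p)=d^{-}(p)=m+1$ for $p\in P$ but $d^{+}(q)=d^{-}(q)=m$ for $q\in Q'$, $d^{+}(s_{2})=d^{-}(t_{1})=m$. Hence the inside-$P$ non-arcs sum to $n+2$ (as you note), but the inside-$Q'$ non-arcs and the non-arcs $s_{2}q$, $qt_{1}$ sum to only $2m=n$, so this digraph fails even the relaxed hypothesis $d^{+}(x)+d^{-}(y)\geq n+1$ and is useless for sharpness. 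The proposed repair by trimming one out-arc from each $p\in P$ cannot be completed: each trimmed arc $pr$ becomes a non-arc requiring $d^{-}(r)=m+1$ after trimming, yet every vertex of $R$ other than $s_{2}$ has in-degree $m$ before trimming and trimming only lowers in-degrees, while routing all trims into $s_{2}$ leaves $d^{-}(s_{2})=m$; in every case the new non-arcs sum to at most $n$, and the deficient $Q'$-sums are untouched (they can only be fixed by \emph{adding} arcs, which your scheme does not provide). So the claim that the ``identical contraction argument'' settles even $n$ is unsupported. If the proposition is read, as the paper's proof and its companion propositions indicate, as a statement for every $n\geq 9$, you still need a construction for even $n$ --- for instance the paper's cut-vertex example; if a single digraph (or an infinite family) suffices as a witness of sharpness, your odd family already proves the statement.
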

\begin{proof}
We define a digraph $D$ of order $n \geq 9$ as follows. Let $V(D) = A \cup B \cup S$ such that $$A \cap B = \{z\}, S = \{s_{1}, s_{2}, t_{1}, t_{2}\}, |A|= m \geq 3, |B|= n - m - 3 \geq 3.$$ Let the arc set $A(D)$ be defined as follows.
\begin{equation}
			\begin{split}
		A(D) &= \{uv, vu \colon u, v \in A\} \cup \{uv, vu \colon u, v \in B\} \\		
     &\cup (\{uv, vu \colon u, v \in S\} \setminus \{s_{1}t_{1}, s_{2}t_{2}\}) \\
				& \cup \{zu, uz \colon u \in S\}\\
				&\cup \{s_{1}u, us_{1}, t_{2}u, ut_{2}, t_{1}u, us_{2} \colon u \in A \setminus \{z\}\} \\
    &\cup \{t_{1}u, ut_{1}, s_{2}u, us_{2}, t_{2}u, us_{1} \colon u \in B \setminus \{z\}\}.
    \nonumber
			\end{split}
		\end{equation}



\begin{figure}[htb]
		\centering
		\begin{tikzpicture}
			\tikzset{arrow/.style = {draw = black, line width = 2pt, {Stealth[length = 2mm, width = 2mm]}-{Stealth[length = 2mm, width = 2mm]},}
			}
			\tikzset{arrow1/.style = {draw = black, line width = 2pt, -{Stealth[length = 3.5mm, width = 3mm]},}
			}
			\tikzset{arrow2/.style = {draw = black, line width = 2pt, {Stealth[length = 3.5mm, width = 3mm]}-{Stealth[length = 3.5mm, width = 3mm]},}
			}
			\tikzset{arrow3/.style = {draw = black, dashed, line width = 2pt, -{Stealth[length = 2.5mm, width = 2.5mm]},}
			}
			\draw[line width=1pt] (0, 6.5) rectangle (2, 0);
			\draw[line width=1pt] (7, 6.5) rectangle (9, 0);
			\draw[line width=1pt] (4.5, 5.8)   circle  [x radius=1.4cm, y radius=0.7cm, rotate=0];
			\draw[line width=1pt] (4.5, 2)   circle  [x radius=1cm, y radius=2.2cm, rotate=0];
			
			\filldraw[black]    (4.5, 5.8)      circle (2pt)   node [anchor=south west]  {$z$};
			\filldraw[black]    (4.5, 3.6)      circle (2pt)   node [anchor=south west]  {$s_{1}$};
			\filldraw[black]    (4.5, 2.6)      circle (2pt)   node [anchor=south east]  {$t_{1}$};	
			\filldraw[black]    (4.5, 1.6)      circle (2pt)   node [anchor=south]       {$s_{2}$};
			\filldraw[black]    (4.5, 0.6)      circle (2pt)   node [anchor=north west]  {$t_{2}$};
			
		    \draw[arrow] []      (2.1, 5.8)  -- (3, 5.8);
			\draw[arrow] []      (6, 5.8)  -- (6.9, 5.8);
			\draw[arrow] []      (4.5, 5)  -- (4.5, 4.3);
			
			\draw[arrow2] []      (2.1, 3.6)  -- (4.3, 3.6);
			\draw[arrow1] []     (4.3, 2.6)  -- (2.1, 2.6);
			\draw[arrow1] []     (2.1, 1.6)  -- (4.3, 1.6);
			\draw[arrow2] []      (2.1, 0.6)  -- (4.3, 0.6);
			
			\draw[arrow1] []     (6.9, 3.6)  -- (4.7, 3.6);
			\draw[arrow2] []      (4.7, 2.6)  -- (6.9, 2.6);
			\draw[arrow2] []      (4.7, 1.6)  -- (6.9, 1.6);
			\draw[arrow1] []     (4.7, 0.6)  -- (6.9, 0.6);
			
			\draw[arrow3] []     (4.5, 3.6)  -- (4.5, 2.6);
			\draw[arrow3] []     (4.5, 1.6)  -- (4.5, 0.6);
			
		\node at(1, 3.3)     {\large $A \setminus\{z\}$};
		\node at(8, 3.3)     {\large $B \setminus \{z\}$};
		\node at(5.3, 4.2)   {\large $S$};
		\node at(4.5, -1)    {\LARGE $D$};
		\end{tikzpicture}
		\caption{The graph of Proposition~\ref{Pro:2t2}.} 
		\label{fig:2t2}
	\end{figure}
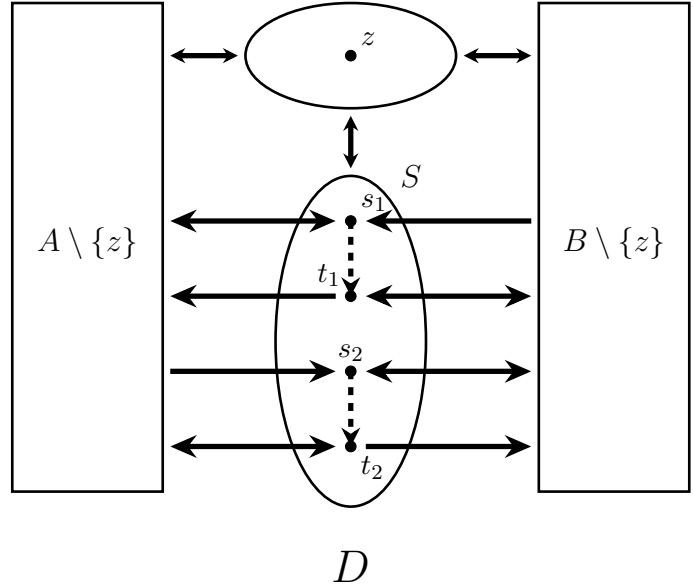

The graph is as shown in Figure~\ref{fig:2t2}, note that the dashed lines in this figure indicate that $s_{1}t_{1}, s_{2}t_{2} \notin A(D[S])$. It is not hard to check that for each $xy \notin A(D)$, we have $d^{+}_{D}(x) + d^{-}_{D}(y) = n + 1$. 
Also, observe that there do not exist disjoint $s_{1}$-$t_{1}$ path and $s_{2}$-$t_{2}$ path (otherwise, they must contain $z$, a contradiction). Hence, there is no paired many-to-many $2$-DDPC from $\{s_{1}, s_{2}\}$ to $\{t_{1}, t_{2}\}$ in $D$, that is, $D$ is not paired many-to-many $2$-coverable.
\qedhere
\end{proof}

\section{One-to-many $k$-DDPC problem}\label{sec:Proof3}








In the following result, we will make improvements for Theorem~\ref{The:otm} by replacing ``$n \geq Ck^{9}$" with ``$n \geq 3k$" and replacing ``$\delta^{0}(D) \geq \lceil (n + k + 1) / 2\rceil$" with ``$\delta^{0}(D) \geq \lceil (n + k)/2 \rceil$", which is sharp when $n+k$ is even and is sharp up to an additive constant 1 otherwise, by Proposition~\ref{pro4}.

\vspace{2mm}

\noindent
{\bf Theorem~\ref{The:main3}.}
{\em Let $D$ be a digraph of order $n \geq 3k$, where $k \geq 2$. If $\delta^{0}(D) \geq \lceil (n + k)/2 \rceil$, then $D$ is one-to-many $k$-coverable. Moreover, the bound for $\delta^{0}(D)$ is sharp when $n+k$ is even and is sharp up to an additive constant 1 otherwise.}
\begin{proof}

Let $S = \{s\}$ and $T = \{t_{1}, t_{2}, \dots, t_{k}\}$ be disjoint source set and sink set in $D$, respectively. 
As $$\delta^{0}(D) \geq \lceil (n + k)/2 \rceil \geq \lceil (3k + k)/2 \rceil=2k,$$ we have $d^{+}_{V(D) \setminus T}(s) \geq k$. Let $$S' = \{s_{1}, s_{2}, \dots , s_{k}\}, T = \{t_{1}, \dots, t_{k}\},$$ where $s_{1} = s$ and $\{s_{2}, s_{3}, \dots , s_{k}\}\subseteq N^{+}_{V(D) \setminus T}(s)$. Since $\delta^{0}(D) \geq \lceil (n + k)/2 \rceil$ and $n \geq 3k$, by Theorem~\ref{The:main1}, $D$ has an unpaired many-to-many $k$-DDPC for $S'$ and $T$, say $\{P_{1}, P_{2}, \dots , P_{k}\}$, where $P_{i}(1 \leq i \leq k)$ is an $s_{i}-t_{\sigma(i)}$ path and $\sigma$ is a permutation on $\{1, 2, \dots , k\}$. Recall that $ss_{i}\in A(D)$ for $2 \leq i \leq k$. Therefore, $\{P_{1}, P^{\ast}_{2}, \dots , P^{\ast}_{k}\}$ is a one-to-many $k$-DDPC for $S$ and $T$ in $D$, where $P^{\ast}_{i} := ss_{i}P_{i}t_{\sigma(i)}$ for $2 \leq i \leq k$. Hence, $D$ is one-to-many $k$-coverable. \qedhere
\end{proof}


\vspace{2mm}

\begin{proposition}\label{pro4}

For every $k\geq 2$ and every $n \geq k + 2$, there exists a digraph $D$ on $n$ vertices with minimum semi-degree $\lceil (n+k-1) / 2 \rceil - 1$ which is not one-to-many $k$-coverable. 
\end{proposition}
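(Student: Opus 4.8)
The plan is to exhibit, for each parity of $n+k$, a digraph built from two complete digraphs that overlap in exactly the prescribed sink set, so that this sink set acts as a separator. Concretely, I would set $V(D)=A\cup B$ with $A\cap B=T=\{t_1,\dots,t_k\}$, together with $D[A]\cong\overleftrightarrow{K}_{p_A}$ and $D[B]\cong\overleftrightarrow{K}_{p_B}$, where $p_A=\lfloor (n+k)/2\rfloor$ and $p_B=\lceil (n+k)/2\rceil$ (so $p_A+p_B=n+k$ and $|A\cup B|=n$), and add no further arcs. The source is any vertex $s\in A\setminus B$; such a vertex exists because $|A\setminus B|=\lfloor (n+k)/2\rfloor-k\geq 1$ is guaranteed by the hypothesis $n\geq k+2$. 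When $n+k$ is even this is the symmetric construction $p_A=p_B=(n+k)/2$, matching Case~1 of Proposition~\ref{pro2}; when $n+k$ is odd the two sides necessarily have different sizes, which is the only place the two cases diverge.

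First I would compute the minimum semi-degree. A vertex of $A\setminus B$ sees only the $p_A-1$ other vertices of $A$, a vertex of $B\setminus A$ sees only the $p_B-1$ other vertices of $B$, and a vertex of $T=A\cap B$ sees all of $A$ and all of $B\setminus A$, hence has degree $n-1$. Since $D$ is a biorientation on each block, in- and out-degrees coincide at every vertex, so $\delta^{0}(D)=\min\{p_A-1,\,p_B-1,\,n-1\}=p_A-1=\lfloor (n+k)/2\rfloor-1=\lceil (n+k-1)/2\rceil-1$, as required.

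Next I would prove non-coverability. The key observation is that there is no arc between $A\setminus B$ and $B\setminus A$, so every arc entering $B\setminus A$ originates in $B=T\cup(B\setminus A)$; thus any path starting in $A\setminus B$ must pass through a vertex of $T$ before it can reach $B\setminus A$. In an $(s,T)$-fan the paths meet only at $s$, so a vertex $t_j\in T$ can lie on a path only as that path's own terminal vertex. Hence no path visits a vertex of $T$ except at its last step, and consequently no path ever enters $B\setminus A$. Since $|B\setminus A|=p_B-k=\lceil (n+k)/2\rceil-k\geq 1$, the set $B\setminus A$ is nonempty and uncovered, so no one-to-many $k$-DDPC for $S=\{s\}$ and $T$ exists; thus $D$ is not one-to-many $k$-coverable.

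The main obstacle is choosing the right construction rather than carrying out any hard computation. The natural first guess, adapting Case~2 of Proposition~\ref{pro2} (a complete digraph completely joined to an independent set), fails here: because all $k$ fan-paths share the source $s$, the counting bound on how many independent-set vertices the paths can cover gains an extra additive term of order $k$, and for $k\geq 2$ this destroys the strict inequality that drove the contradiction there. Switching to the overlapping-complete-digraphs construction replaces that fragile counting argument with a robust separator argument that is insensitive to $k$ and works uniformly in both parities; verifying that the same vertex set realizes exactly the degree $\lceil (n+k-1)/2\rceil-1$ in both parities (via the identity $\lfloor (n+k)/2\rfloor=\lceil (n+k-1)/2\rceil$) is then routine.
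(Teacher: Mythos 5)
Your proof is correct, but it takes a genuinely different route from the paper. You use a single construction for both parities: two bioriented cliques $A$ and $B$ of sizes $\lfloor (n+k)/2\rfloor$ and $\lceil (n+k)/2\rceil$ overlapping in exactly the sink set $T$ (so $|A\cap B|=k$), and the non-coverability follows from a separator argument that exploits the fan structure itself: every vertex of $T$ can appear on a path only as its terminal vertex, so no path can cross $T$, and $B\setminus A$ stays uncovered. The paper instead splits into two cases with two different constructions: for $n+k$ odd it takes overlapping cliques with $|A\cap B|=k-1$ and argues that $k$ internally disjoint paths from $s$ into $B$ would require $k$ distinct vertices in a cut of size $k-1$; for $n+k$ even it takes $\overleftrightarrow{K}_{(n+k)/2-1}$ completely joined to an independent set $E_{(n-k)/2+1}$, places the source \emph{inside the independent set}, and runs a counting argument showing the paths cover at most $(n-k)/2$ of the $(n-k)/2+1$ independent vertices. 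Your approach buys uniformity and a parity-insensitive argument that requires no counting; the paper's approach buys, in the odd case, an example that is "worse" in a stronger sense (even internally disjoint paths to $T\subseteq B$ cannot exist, regardless of covering). One remark on your closing paragraph: the clique-joined-to-independent-set construction does not in fact fail here --- the paper makes it work for the even case precisely by putting $s$ in the independent set, which turns the shared-source correction into $-(k-1)$ rather than the harmless $-1$ you get with $s$ in the clique; the failure you describe occurs only for that latter placement (and, checking the inequality, only for $k\geq 3$). This does not affect the validity of your own argument.
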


\begin{proof}
    
The proof is divided into the following two cases according to the parity of $n+k$. 
\begin{case 1}
$n+k$ is odd.
\end{case 1}
Let $D$ be a digraph of order $n \geq k + 2$ such that $$V(D)=A\cup B, |A\cap B|=k-1, D[A]\cong D[B]\cong \overleftrightarrow{K}_{(n+k-1)/2}.$$
Observe that $$\delta^{0}(D) =(n+k-1)/2-1=\lceil (n+k-1) / 2 \rceil - 1.$$ Let $$S=\{s\} \subseteq A \setminus B, T=\{t_{i} \colon 1 \leq i \leq k\} \subseteq B.$$ 
As $|A\cap B|=k-1$, there is no one-to-many $k$-DDPC for $S$ and $T$. Hence, $D$ is not one-to-many $k$-coverable. 
\begin{case 2}
	$n+k$ is even.
	\end{case 2}
Let $D_{1}\cong \overleftrightarrow{K}_{(n+k)/2-1}$ and $D_{2}\cong {E}_{(n-k)/2+1}$.
Let $D$ be a new digraph of order $n \geq k+2$ such that 	
$V(D) = V(D_{1}) \cup V(D_{2})$ and $$A(D) = A(D_{1}) \cup A(D_{2}) \cup \{xy, yx \colon x \in V(D_{1}), \, y \in V(D_{2})\}.$$
Observe that  $$\delta^{0}(D) =(n+k)/2-1 =\lceil (n+k-1) / 2 \rceil - 1.$$ 
Let $$S=\{s\} \subseteq V(D_{2}), T=\{t_{1}, t_{2}, \dots, t_{k}\} \subseteq V(D_{1}).$$ It can be checked that a set of disjoint paths $\{P_i \colon 1 \leq i \leq k\}$ covers at most $$((n+k)/2-1)-k+1=(n-k)/2(<|V(D_2)|)$$ vertices of $V(D_{2})$, where $P_i~(1 \leq i \leq k)$ is an $s-t_i$ path. Hence, $D$ is not one-to-many $k$-coverable.  \qedhere
\end{proof}

\section{One-to-one $k$-DDPC problem}

In the following result, we will improve  Theorem~\ref{The:oto} by replacing ``$n \geq Ck^{9}$" with ``$n \geq k+1$" and replacing ``$\delta^{0}(D) \geq \lceil (n + k + 1) / 2\rceil$" with ``$\delta^{0}(D) \geq \lceil (n + k-1)/2 \rceil$", which is sharp when $n+k$ is odd and is sharp up to an additive constant 1 otherwise, by Proposition~\ref{pro5}.

\vspace{2mm}

\noindent
{\bf Theorem~\ref{The:main4}.}
{\em  Let $D$ be a digraph of order $n \geq k + 1$, where $k \geq 2$. If $\delta^{0}(D) \geq \lceil (n + k - 1)/2 \rceil$, then $D$ is one-to-one $k$-coverable.	
Moreover, the bound for $\delta^{0}(D)$ is sharp when $n+k$ is odd and is sharp up to an additive constant 1 otherwise.}
\begin{proof}

We will prove the theorem by induction on $k$. For the base case that $k=2$, we prove the following claim:

\vspace{2mm}

\noindent{\bf Claim:} 
Let $D$ be a digraph of order $n \geq 3$. If $\delta^{0}(D) \geq \lceil (n + 1)/2 \rceil$, then $D$ is one-to-one $2$-coverable.

\vspace{2mm}

\noindent{\bf Proof of the claim:}
Let $S = \{s\}$ and $T = \{t\}$ such that $s$ and $t$ are distinct vertices of $D$. 
By Theorem~\ref{The:n+1}, $D$ is Hamiltonian connected as $\delta^{0}(D) \geq \lceil (n + 1)/2 \rceil$. Thus, there is a Hamiltonian path $P$ from $s$ to $t$. For each $v\in V(P)\setminus \{s\}$, we use $v^{-}$ to denote the predecessor of $v$ on $P$. 
Let $$X= \{v^{-} \colon v \in N^{+}_{D}(s)\}, \ Y = \{u \colon u \in N^{-}_{D}(t)\}.$$ As $t\not\in X \cup Y$, $\lvert X \cup Y \rvert \leq n-1$. Hence,
$$\lvert X \cap Y \rvert = \lvert X \rvert + \lvert Y \rvert - \lvert X \cup Y \rvert \geq \lceil (n+1)/2 \rceil + \lceil (n+1)/2 \rceil - (n-1) \geq 2.$$
We choose a vertex $w \in X \cap Y$, clearly $sw^{+}, wt \in A(D)$. Now $\{P_{1}, P_{2}\}$ is a one-to-one $2$-DDPC for $S$ and $T$, where $P_{1} = sw^{+}Pt$ and $P_{2} = sPwt$ (see Figure~\ref{fig:two}). 
Hence, $D$ is one-to-one $2$-coverable.\qed
\begin{figure}[htb]
			\centering
			\begin{tikzpicture}
\tikzset{arrow1/.style = {draw = black, thick, -{Latex[length = 3.5mm, width = 1.7mm]},}
			}
  \tikzset{arrow2/.style = {draw = black, thick, -{Latex[length = 2.7mm, width = 1.3mm]},}
			}              
    
				\filldraw[black]    (0, 0)  circle (3pt)  node [anchor=east] {$s$};
				\filldraw[black]    (3.5, 0)  circle (3pt)  node [anchor=south] {$w$};
				\filldraw[black]    (5, 0)  circle (3pt)  node [anchor=north] {$w^{+}$};
				\filldraw[black]    (8.5, 0)  circle (3pt)  node [anchor=west] {$t$};
				
				\draw[arrow1] [line width=2pt]      (0, 0) .. controls (2.5, 1.5) and (3.5, 1.5) .. (5, 0) ;
				\draw[arrow1] [line width=2pt]     (3.5, 0) .. controls (5, -1.5) and (6, -1.5) .. (8.5, 0) ;
				
				\draw[arrow1] [line width=2pt]     (0, 0) -- (3.5, 0);
				\draw[arrow2] []      (3.5, 0) -- (5, 0);
				\draw[arrow1] [line width=2pt]      (5, 0) -- (8.5, 0);
				
			\end{tikzpicture}
			\caption{The figure of the claim. The $s$-$t$ path is a Hamiltonian path. The thicker lines express two new $s$-$t$ paths: $P_{1} = sw^{+}Pt$ and $P_{2} = sPwt$. }
			\label{fig:two}
		\end{figure}
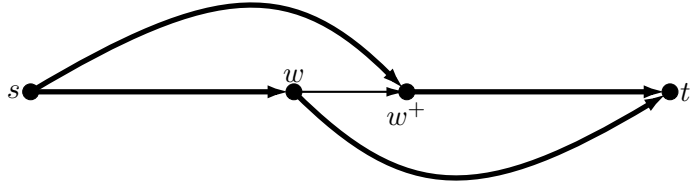

\vspace{2mm}

We next prove the inductive step. Assume that $k \geq 3$. Let $S^{\ast} = \{s^{\ast}\}$ and $T^{\ast} = \{t^{\ast}\}$ such that $s^{\ast}$ and $t^{\ast}$ are distinct vertices of $D$. As $\lvert N^{+}_{D}(s^{\ast}) \cup N^{-}_{D}(t^{\ast}) \rvert \leq n$, 
\begin{equation*}
\begin{split}
\lvert N^{+}_{D}(s^{\ast}) \cap N^{-}_{D}(t^{\ast}) \rvert 
& = \lvert N^{+}_{D}(s^{\ast}) \rvert + \lvert N^{-}_{D}(t^{\ast}) \rvert - \lvert N^{+}_{D}(s^{\ast}) \cup N^{-}_{D}(t^{\ast}) \rvert \\ & \geq \lceil (n+k-1)/2 \rceil + \lceil (n+k-1)/2 \rceil - n \geq 2.
\end{split}
\end{equation*}
We choose a vertex $h \in N^{+}_{D}(s^{\ast}) \cap N^{-}_{D}(t^{\ast})$. Let $D_{1} = D-\{h\}$ with order $n_{1} (= n-1\geq (k-1)+1)$. Observe that 
$$\delta^{0}(D_{1}) \geq \lceil (n+k-1)/2 \rceil - 1 = \lceil (n-1+k-1-1)/2 \rceil = \lceil (n_{1}+(k-1)-1)/2 \rceil.$$ By the induction hypothesis, $D_{1}$ is one-to-one $(k-1)$-coverable. Let $\{P_{1}, P_{2}, \dots , P_{k-1}\}$ be a one-to-one $(k-1)$-DDPC for $S^{\ast}$ and $T^{\ast}$ in $D_{1}$. Recall that $s^{\ast}h$, $ht^{\ast} \in A(D)$. Clearly, $\{P_{1}, P_{2}, \dots , P_{k-1}, P_{k}\}$ is a one-to-one $k$-DDPC for $S^{\ast}$ and $T^{\ast}$ in $D$, where $P_{k} := s^{\ast}ht^{\ast}$. Hence, $D$ is one-to-one $k$-coverable.\qedhere
\end{proof}


\vspace{2mm}

\begin{proposition}\label{pro5}
For every $k\geq 2$ and every $n \geq k + 1$, there exists a digraph $D$ on $n$ vertices with minimum semi-degree $\lceil (n+k) / 2 \rceil - 2$ which is not one-to-one $k$-coverable.
\end{proposition}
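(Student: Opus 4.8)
The plan is to build a digraph whose $s$–$t$ connectivity is funnelled through a small shared clique, so that strictly fewer than $k$ internally disjoint $s$–$t$ paths can exist; since a one-to-one $k$-DDPC requires $k$ such paths, no cover is possible. Concretely, I would take $V(D)=A\cup B$ with $D[A]\cong D[B]\cong\overleftrightarrow{K}_{m}$ and $A\cap B$ a common clique, putting all arcs inside $A$ and inside $B$ but none between $A\setminus B$ and $B\setminus A$, exactly as in Propositions~\ref{pro2} and~\ref{pro3}. Then $A\cap B$ is an $(A\setminus B,B\setminus A)$-separator. The difference from the many-to-many case is that the obstruction here is a cut (Menger-type) argument rather than a counting/covering argument, so only the size of $A\cap B$ will matter.

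To make the minimum semi-degree land exactly on $\lceil(n+k)/2\rceil-2$, I would split according to the parity of $n+k$. When $n+k$ is odd, set $|A\cap B|=k-1$ and $m=(n+k-1)/2$; when $n+k$ is even, set $|A\cap B|=k-2$ and $m=(n+k)/2-1$. In both cases the identity $n=2m-|A\cap B|$ holds, so the orders are consistent, and a direct degree count gives $\delta^{0}(D)=m-1$: every vertex of $A\cap B$ is adjacent (in both directions) to all of $V(D)$ and thus has semi-degree $n-1$, while each vertex of $A\setminus B$ or $B\setminus A$ sees only the $m-1$ other vertices of its clique and realizes the minimum. A short check shows $m-1=\lceil(n+k)/2\rceil-2$ in each parity, and that $|A\setminus B|,|B\setminus A|\ge 1$ whenever $n\ge k+1$, so a source $s\in A\setminus B$ and a sink $t\in B\setminus A$ can be chosen.

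With $S=\{s\}$ and $T=\{t\}$ fixed this way, the key lemma is that every $s$–$t$ path meets $A\cap B$: tracing $s=v_{0},\dots,v_{\ell}=t$, let $v_{j}$ be the first vertex lying outside $A$; then the arc $v_{j-1}v_{j}\in A(D)$ forces $v_{j-1}\in B$ as well, whence $v_{j-1}\in A\cap B$. Because the paths of a one-to-one $k$-DDPC share only $s$ and $t$, and the vertices of $A\cap B$ are internal (distinct from $s\in A\setminus B$ and $t\in B\setminus A$), at most $|A\cap B|$ of these paths can coexist. Since $|A\cap B|\in\{k-1,k-2\}<k$, there is no one-to-one $k$-DDPC for this choice of $S,T$, so $D$ is not one-to-one $k$-coverable, which is all that is required.

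The main obstacle I expect is purely bookkeeping: confirming that the two prescribed intersection sizes yield precisely $\lceil(n+k)/2\rceil-2$ across both parities and for all admissible $n\ge k+1$, and dealing with the degenerate endpoints of the range. In particular, the even case with $k=2$ gives $|A\cap B|=0$, i.e.\ two vertex-disjoint cliques with no arcs between them and hence no $s$–$t$ path whatsoever; this still conforms to the claim (it only makes the obstruction stronger). The separator argument itself is robust and uses nothing about the spanning requirement, only the cut size, which is exactly what pins down the sharpness here.
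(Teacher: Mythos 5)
Your proposal is correct and is essentially identical to the paper's proof: the same two-clique construction $V(D)=A\cup B$ with $|A\cap B|=k-1$ or $k-2$ according to the parity of $n+k$, the same degree computation, and the same obstruction that every $s$--$t$ path must pass through the separator $A\cap B$, whose size is less than $k$. The only difference is that you spell out the Menger-type separator lemma and the bookkeeping explicitly, whereas the paper states the conclusion directly from $|A\cap B|<k$.
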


\begin{proof}
    
The proof is divided into the following two cases according to the parity of $n+k$. 

\begin{case 3}
$n+k$ is odd.
\end{case 3}
Let $D$ be a digraph of order $n \geq k+1$ such that $$V(D)=A\cup B, |A\cap B|=k-1, D[A]\cong D[B]\cong \overleftrightarrow{K}_{(n+k-1)/2}.$$
Observe that
$$\delta^{0}(D) = (n+k-1) / 2 - 1=\lceil (n+k) / 2 \rceil - 2.$$
Let 
$$S=\{s\} \subseteq A \setminus B, \ T=\{t\} \subseteq B \setminus A.$$ 
As $\lvert A \cap B \rvert = k-1$, there is no one-to-one $k$-DDPC for $S$ and $T$. Hence, $D$ is not one-to-one $k$-coverable.

\begin{case 4}
$n+k$ is even.
\end{case 4}
Let $D$ be a digraph of order $n \geq k+1$ such that 
$$V(D)=A\cup B, |A\cap B|=k-2, D[A]\cong D[B]\cong \overleftrightarrow{K}_{(n+k)/2-1}.$$
Observe that $$\delta^{0}(D) = (n+k) / 2 - 2=\lceil (n+k) / 2 \rceil - 2.$$
Let 
$$S=\{s\} \subseteq A \setminus B, \ T=\{t\} \subseteq B \setminus A$$ 
As $\lvert A \cap B \rvert = k-2$, there is no one-to-one $k$-DDPC for $S$ and $T$. Hence, $D$ is not one-to-one $k$-coverable.\qedhere
\end{proof}

\vskip 1cm

\noindent {\bf Acknowledgement.} We thank Professor Gexin Yu for his helpful suggestions and constructive comments. This work was supported by National Natural Science Foundation of China under Grant No. 12371352, Zhejiang Provincial Natural Science Foundation of China under Grant No. LY23A010011, Yongjiang Talent Introduction Programme of Ningbo under Grant No. 2021B-011-G, and Zhejiang Province College Students Science and
Technology Innovation Activity Plan and New Seedling Talent Program.

\end{sloppypar}

\begin{thebibliography}{20}
	\bibitem{Bang-Jensen09}J. Bang-Jensen and G. Gutin, Digraphs: Theory, Algorithms and Applications, 2nd Edition, Springer, London, 2009.
        \bibitem{Cao18}H. Cao, B. Zhang, Z. Zhou, one-to-one disjoint path covers in digraphs, Theoretical Computer Science, 714 (2018) 27-35.
        \bibitem{Chen10} X. Chen, Unpaired many-to-many vertex-disjoint path covers of a class of bipartite graphs, Information Processing Letters, 110 (2010) 203–205.
        \bibitem{Chud-Scott-SeymourAM} M. Chudnovsky, A. Scott and P.D. Seymour, Disjoint paths in tournaments, Advances in Mathematics, 270 (2015) 582--597.
        \bibitem{Chud-Scott-Seymour} M. Chudnovsky, A. Scott and P.D. Seymour, Disjoint paths in unions of tournaments, Journal of Combinatorial Theory, Series B, 135 (2019) 238--255.
        \bibitem{Ferrara} M. Ferrara, M. Jacobson and F. Pfender, Degree conditions for $H$-Linked digraphs, Combinatorics, Probability and Computing, 22 (2013) 684--699.
        \bibitem{Gould2006} R. Gould, A. Kostochka and G. Yu, On minimum degree implying that a graph is $H$-linked, SIAM Journal on Discrete Mathematics, 20 (2006) 829--840.
        \bibitem{Kuhn2008} D. K\"{u}hn, and D. Osthus, Linkedness and ordered cycles in digraphs, Combinatorics, Probability and Computing, 17 (2008) 689--709.
        \bibitem{Khn08}D. K\"{u}hn, D. Osthus, A. Young, $k$-ordered hamilton cycles in digraphs, Journal of Combinatorial Theory, Series B, 98 (6) (2008) 1165–1180.
        \bibitem{Li20}J. Li, C. Melekian, S. Zuo, E. Cheng, Unpaired many-to-many disjoint path covers on bipartite $k$-ary $n$-cube networks with faulty elements, International Journal of Foundations of Computer Science, 31 (3) (2020) 371-383.
        \bibitem{Lim16}H. S. Lim, H. C. Kim, J. H. Park, Ore-type degree conditions for disjoint path covers in simple graphs, Discrete Mathematics, 339 (2) (2016) 770–779.
        \bibitem{Liu-Rolek-Stephens-Ye-Yu}R. Liu, M. Rolek, C. Stephens, D. Ye and G. Yu, Connectivity for kite-linked graphs, SIAM Journal on Discrete Mathematics, 35(1) (2021), Article 431446.
        \bibitem{Lu21}H. L\"{u}, T. Wu, Unpaired many-to-many disjoint path cover of balanced hypercubes, International Journal of Foundations of Computer Science, 32 (2021) 943-956.
        \bibitem{MSZ23}A. Ma, Y. Sun, X. Zhang, A minimum semi-degree sufficient condition for one-to-many disjoint path covers in semicomplete digraphs, Discrete Mathematics, 346(8) (2023), Article 113401.
        \bibitem{Meng-Rolek-Wang-Yu}W. Meng, M. Rolek, Y. Wang and G. Yu, An improved linear connectivity bound for tournaments to be highly linked, European Journal of Combinatorics, 98 (2021), Article 103390.
        \bibitem{Ntafos}S.-C. Ntafos, S.-L. Hakimi, On path cover problems in digraphs and applications to program testing, IEEE Transactions on Software Engineering, 5(5) (1979) 520-529.
        \bibitem{Park16}J. H. Park, Unpaired many-to-many disjoint path covers in restricted hypercube-like graphs, Theoretical Computer Science, 617 (2016) 45-64.
        \bibitem{Park20}J.H. Park, H.S. Lim, Characterization of interval graphs that are unpaired 2-disjoint path coverable, Theoretical Computer Science, 821 (2020) 71-86.
        \bibitem{Park21}J. H. Park, A sufficient condition for the unpaired $k$-disjoint path coverability of interval graphs, The Journal of Supercomputing, 77 (2021) 6871-6888.
        \bibitem{Sun-Yeo} Y. Sun and A. Yeo, Directed Steiner tree packing and directed tree connectivity, Journal of Graph Theory, 102(1) (2023) 86--106.
        \bibitem{MOL1976}M. Overbeck-Larisch, Hamiltonian paths in oriented graphs, Journal of Combinatorial Theory, Series B, 21 (1976) 76-80.
        \bibitem{Thomas-Wollan}R. Thomas, P. Wollan, An improved linear edge bound for graph linkages, European Journal of Combinatorics, 26 (2005), 309--324
        \bibitem{Wang2019}F. Wang, W. Zhao, One-to-one disjoint path covers in hypercubes with faulty edges, The Journal of Supercomputing, 75(8) (2019) 5583-5595.
        \bibitem{Yu2018}G. Yu, Covering 2-connected 3-regular graphs with disjoint paths, Journal of Graph Theory, 88 (2018), 385-401.
\bibitem{Zhou-Qi-Yan}J. Zhou, Y. Qi and J. Yan, Improved results on linkage problems, Discrete Mathematics, 346(6) (2023), Article 113351.
 \bibitem{Zhou}Z. Zhou, Semi-degree conditions for one-to-many disjoint path covers in digraph, submitted.
       
\end{thebibliography}
\end{document}